\definecolor{orange}{RGB}{209, 103, 17}
\definecolor{OliveGreen}{RGB}{4, 110, 15}
\DeclareMathAlphabet{\mathbfit}{OML}{cmm}{b}{it}
\def\hlinewd#1{%
	\noalign{\ifnum0=`}\fi\hrule \@height #1 %
	\futurelet\reserved@a\@xhline}
\newtheorem{theorem}{Theorem}[section]
\newtheorem{proposition}[theorem]{Proposition}
\newtheorem{lemma}[theorem]{Lemma}
\newtheorem{question}{Question}
\theoremstyle{definition}
\newtheorem{remark}[theorem]{Remark}
\newtheorem{definition}[theorem]{Definition}
\newcommand{\Q}{\mathbb{Q}}
\newcommand{\Z}{\mathbb{Z}}
\newcommand{\F}{\mathbb{F}}
\newcommand{\PP}{\mathfrak{P}}
\newcommand{\OO}{\mathcal{O}}
\newcommand{\Gal}{\operatorname{Gal}}
\newcommand{\fp}{\mathfrak p}
\newcommand{\fq}{\mathfrak q}
\renewcommand{\P}{\mathbb{P}}
\DeclareMathOperator{\res}{res}
\DeclareMathOperator{\Frob}{Frob}
\title{Splitting of primes in number fields generated by points on some modular curves}
\date{\today}
\author{Filip Najman}
\address{Department of Mathematics, Faculty of Science, University of Zagreb, Bijenička cesta 30, 10000 Zagreb, Croatia}
\email{fnajman@math.hr}
\author{Antonela Trbović}
\address{Department of Mathematics, Faculty of Science, University of Zagreb, Bijenička cesta 30, 10000 Zagreb, Croatia}
\email{antonela.trbovic@math.hr}
\thanks{The authors were supported by the QuantiXLie Centre of Excellence, a project
	co-financed by the Croatian Government and European Union through the
	European Regional Development Fund - the Competitiveness and Cohesion
	Operational Programme (Grant KK.01.1.1.01.0004) and by the Croatian Science Foundation under the project no. IP-2018-01-1313.}
\begin{document}
\maketitle

\begin{abstract}
	We study the splitting of primes in number fields generated by points on modular curves. Momose \cite{momose} was the first to notice that quadratic points on $X_1(N)$ generate quadratic fields over which certain primes split in a particular way and his results were later expanded upon by Krumm \cite{kru}. We prove results about the splitting behaviour of primes in quadratic fields generated by points on the modular curves $X_0(N)$ which are hyperelliptic (except for $N=37$) and in cubic fields generated by points on $X_1(2,14)$.
\end{abstract}

\section{Introduction}

A famous and much-studied problem in the theory of elliptic curves, going back to Mazur's torsion theorem \cite{mazur}, is to determine the possible torsion groups of elliptic curves over $K$, for a given number field $K$ or over all number fields of degree $d$. Here we are more interested in the inverse question:
\begin{question}
\label{ques}

For a given torsion group $T$ and a positive integer $d$, for which and what kind of number fields $K$ of degree $d$ do there exist elliptic curves $E$ such that $E(K)\simeq T$?

 \end{question}

 To make \Cref{ques} sensible, one should of course choose the group $T$ in a such a way that the set of such fields  should be non-empty and preferably infinite.

It has been noted already by Momose \cite{momose} in 1984 (see also \cite{km}) that the existence of specific torsion groups $T$ over a quadratic field $K$ forces certain rational primes to split in a particular way in $K$. Krumm \cite{kru} in his PhD thesis obtained similar results about splitting of primes over quadratic fields $K$ with $T\simeq \Z/13\Z$ or $\Z/18 \Z$ and it was also proven by Bosman, Bruin, Dujella and Najman \cite{bbdn} and Krumm \cite{kru} independently that all such quadratic fields must be real.

The first such result over cubic fields was proven by Bruin and Najman \cite{BN2}, where it was shown for $T\simeq \Z/2\Z \times \Z/14\Z$ that all such cubic fields $K$ must be cyclic. In this paper we explore this particular case further and prove in \Cref{sec:cubic} that in such a field $2$ always splits, giving the first description of a splitting behaviour forced by the existence of a torsion group of an elliptic curve over a cubic field. Furthermore, we show that all primes $q\equiv \pm 1 \pmod 7$ of multiplicative reduction for such curves split in $K$. The proof of these results turns out to be more intricate than in the quadratic case.

As \Cref{ques} can equivalently be phrased as asking when the modular curve $X_1(M,N)$ parameterizing elliptic curves together with the generators of a torsion subgroup $T\simeq \Z/M\Z \times \Z/N\Z$ has non-cuspidal points over $K$, one is naturally drawn to ask a more general question by replacing $X_1(M,N)$ by any modular curve $X$.

\begin{question}
\label{ques2}
For a given modular curve $X$ and a positive integer $d$, for which and what kind of number fields $K$ of degree $d$ do there exist non-cuspidal points in $X(K)$?
 \end{question}

The most natural modular curves to consider next are the classical modular curves $X_0(N)$ classifying elliptic curves with cyclic isogenies of degree $N$. Bruin and Najman \cite{BN} studied quadratic points on the modular curves $X_0(N)$ which are hyperelliptic and showed that for $N\neq 37$ all quadratic points, except finitely many explicitly listed exceptions, are \textit{non-exceptional}, meaning that if $X_0(N)$ is written as $y^2=f_N(x)$, then $x$ is $\Q$-rational (see also \Cref{exceptional}). Furthermore, they show that the non-exceptional points correspond to $\Q$-curves, i.e. elliptic curves that are isogenous to their Galois conjugates (see \cite[Section 2.1]{BN} for an explanation of this fact). They also proved that quadratic fields $K$ over which $X_0(N)$ for $N=28$ and $40$ have non-cuspidal points are always real. In this paper we prove the first results about splitting of certain primes over quadratic fields where some modular curves $X_0(N)$ have non-cuspidal points. We consider all the $N$ such that $X_0(N)$ is hyperelliptic except for $N=37$, in particular
\begin{equation}\label{lista}
  N\in \{22, 23, 26, 28, 29, 30, 31, 33, 35, 39, 40, 41, 46, 47, 48, 50, 59, 71\}.
\end{equation}
The reason we exclude $N=37$ is that the quadratic points on $X_0(37)$ cannot all be described (with finitely many exceptions) as inverse images of $\P^1(\Q)$ with respect to the degree $2$ hyperelliptic map $X_0(37)\rightarrow \P^1$. For more details about quadratic points on $X_0(37)$, see \cite{box}. In \Cref{sec:quadratic} we prove a series of results about the splitting behaviour of various primes in quadratic fields generated by quadratic points on $X_0(N)$.

 A difficulty in proving these results that one immediately encounters is that the methods of \cite{km} and \cite{momose} cannot be adapted to $X_0(N)$ as the existence of a torsion point of large order forces bad reduction on the elliptic curve (see for example \cite[Lemma 1.9]{momose}), while the existence of an isogeny does not. Hence we approach the problem via explicit equations and parameterizations of modular curves, more in the spirit of \cite{bbdn,BN,kru} instead of moduli-theoretic considerations as in \cite{km, momose}.

Our results have some overlap with prior work of Gonz\'alez \cite{gonz}, Clark \cite{clark} and Ozman \cite{EO}. Gonz\'alez proved results about fields generated by $j$-invariants of $\Q$-curves. Since, as mentioned before, for the values $N$ that we study almost all $N$-isogenies over quadratic fields come from $\Q$-curves, our results are reminiscent of his, but it turns out there is little overlap in the results that are proved. This is perhaps not very surprising as we do not use the fact that we are looking at $\Q$-curves at all. Ozman and Clark, on the other hand, give criteria for twists of the modular curves $X_0(N)$ to not have local points. Using our approach we reprove some of their results. More details on the overlap of our and their results can be found in \Cref{rem:ozman}.

Our main results over quadratic fields are \Cref{thm2.1a}, \Cref{thm2.1b} and \Cref{thm2.1c}. We do not state them here in the introduction as the statements are quite long. Writing $X_0(N)$ as $y^2=f_N(x)$, we prove these results by showing how various properties of $f_N$ determine the possible values of $f_N(x)$ modulo squares, for $x\in \Q$, and how this restricts the possible quadratic fields over which $X_0(N)$ might have non-cuspidal points. To do this, we use mostly known results from algebraic number theory, although not always in a straightforward way.

Before we proved these results, we ran extensive numerical computations where we looked for patterns in the quadratic fields over which hyperelliptic curves $X_0(N)$ have non-cuspidal points. This is how we obtained most of the results that we wanted to prove, and which we indeed proved afterwards. Our results are exhaustive in the sense that we have explained and proved all the patterns that we have noticed, although of course we cannot exclude the possibility that there exist other results of this type that we have overlooked.

The computations in this paper were executed in the computer algebra system
Magma \cite{MAGMA}. Throughout the paper, in places where we used Magma to compute something, we use the phrase "we compute" or something very similar. The code used in this paper can be found at
\url{https://web.math.pmf.unizg.hr/~atrbovi/magma/magma2.htm}.

\section{Splitting of primes in quadratic fields generated by points on $X_0(N)$}
\label{sec:quadratic}

In this section we study the splitting behaviour of primes in quadratic fields over which the modular curves $X_0(N)$ have non-cuspidal points. Models for $X_0(N)$ have been obtained from the \texttt{SmallModularCurves} database in Magma and can be found in Table \ref{tab:jed}.

\newpage
\begin{table}[h]
	\begin{tabular}{cl}\hlinewd{0.8pt}	\\[-0.8em]
		\multicolumn{1}{c}{$\mathbfit{N} $} & \multicolumn{1}{l}{\textup{\textbf{$\mathbfit{f_N(x)}$ from the equation $\mathbfit{y^2=f_N(x)}$ for $\mathbfit{X_0(N)}$ and the factorization in $\mathbfit{\Q[X]}$}}} \\ \hlinewd{0.8pt}
		\\[-0.8em]
		$\:\:\:\:\mathbf{22}\:\:\:\:$                        & $x^6 - 4x^4 + 20x^3 - 40x^2 + 48x-32$\\ &$=(x^3-2x^2 + 4x - 4)(x^3 + 2x^2 - 4x + 8)$          \\ \hdashline
		\\[-0.8em]
		$\mathbf{23}$     &   $x^6-8x^5+2x^4+2x^3-11x^2+10x-7$	\\ &$=(x^3 - 8x^2 + 3x - 7)(x^3 - x + 1)$         							                                              \\ \hdashline
		\\[-0.8em]
		$\mathbf{26}$                        & $x^6-8x^5+8x^4-18x^3+8x^2-8x+1$     							                                                                   \\ \hdashline
		\\[-0.8em]
		$\mathbf{28}$                        & $4x^6-12x^5+25x^4-30x^3+25x^2-12x+4	$  	\\ &$=(2x^2 - 3x + 2)(x^2 - x + 2)(2x^2 - x + 1)$         							                                                             \\ \hdashline
		\\[-0.8em]
		$\mathbf{29}$                        & $x^6 - 4x^5 - 12x^4 + 2x^3 + 8x^2 + 8x-7	$                                              \\ \hdashline
		\\[-0.8em]
		$\mathbf{30}$                        & $x^8 + 14x^7 + 79x^6 + 242x^5 + 441x^4 +
		484x^3 + 316x^2 + 112x + 16$     \\ &$=(x^2 + 3x + 1)(x^2 + 6x + 4)(x^4 + 5x^3 + 11x^2 + 10x + 4)$         							                                                                                          \\ \hdashline
		\\[-0.8em]
		$\mathbf{31}$                        & $x^6 -8x^5 + 6x^4 + 18x^3 -11x^2 -
		14x -3						$      	\\ &$=(x^3 - 6x^2 - 5x - 1)(x^3 - 2x^2 - x + 3)$         							                                                                                 \\ \hdashline
		\\[-0.8em]
		$\mathbf{33}$                        & $ x^8 + 10x^6 - 8x^5 + 47x^4 - 40x^3 +
		82x^2 - 44x + 33	$         	\\ &$=(x^2 - x + 3)(x^6 + x^5 + 8x^4 - 3x^3 + 20x^2 - 11x + 11)$         					                                         \\ \hdashline
		\\[-0.8em]
		$\mathbf{35}$                        & $ x^8 - 4x^7 - 6x^6 - 4x^5 - 9x^4 + 4x^3
		- 6x^2 + 4x + 1	$  	\\ &$=(x^2 + x - 1)(x^6 - 5x^5 - 9x^3 - 5x - 1)$         					                                                        \\ \hdashline
		\\[-0.8em]
		$\mathbf{39}$                        & $ x^8 - 6x^7 + 3x^6 + 12x^5 -23x^4 +
		12x^3 + 3x^2 - 6x + 1	$   \\ &$=(x^4 - 7x^3 + 11x^2 - 7x + 1)(x^4 + x^3 - x^2 + x + 1)$         					                                                  \\ \hdashline
		\\[-0.8em]
		$\mathbf{40}$                        & $ x^8 + 8x^6 - 2x^4 + 8x^2 + 1$						                                              \\ \hdashline
		\\[-0.8em]
		$\mathbf{41}$                        & $ x^8 -4x^7- 8x^6 + 10x^5 + 20x^4 +
		8x^3 -15x^2 -20x - 8$                                              \\ \hdashline
		\\[-0.8em]
		$\mathbf{46}$                        & $x^{12} - 2x^{11} + 5-x^{10} + 6x^9 - 26x^8 +
		84x^7 - 113x^6 + 134x^5 - 64x^4 + 26x^3 + 12x^2 + 8x - 7							$     \\ &$=(x^3 - 2x^2 + 3x - 1)(x^3 + x^2 - x + 7)(x^6 - x^5 + 4x^4 - x^3 + 2x^2 + 2x + 1)$         				                                         \\ \hdashline
		\\[-0.8em]
		$\mathbf{47}$                        & $x^{10}-6x^9 + 11x^8 - 24x^7 + 19x^6 -
		16x^5 - 13x^4 + 30x^3 - 38x^2 + 28x - 11	$         \\ &$=(x^5 - 5x^4 + 5x^3 - 15x^2 + 6x - 11)(x^5 - x^4 + x^3 + x^2 - 2x + 1)$         				                                                                             \\ \hdashline
		\\[-0.8em]
		$\mathbf{48}$                        & $x^8 + 14x^4 + 1	$         \\ &$=(x^4 - 2x^3 + 2x^2 + 2x + 1)(x^4 + 2x^3 + 2x^2 - 2x + 1)$         				                                                               \\ \hdashline
		\\[-0.8em]
		$\mathbf{50}$                        & $x^6-4x^5 -10x^3 - 4x + 1	$                                              \\ \hdashline
		\\[-0.8em]
		$\mathbf{59}$                        & $x^{12} - 8x^{11} + 22x^{10} - 28x^9 + 3x^8 +
		40x^7 - 62x^6 + 40x^5 - 3x^4 - 24x^3 + 20x^2 - 4x - 8					$                                       \\ &$=(x^3 - x^2 - x + 2)(x^9 - 7x^8 + 16x^7 - 21x^6 + 12x^5 - x^4 - 9x^3 + 6x^2 - 4x - 4)$                 \\ \hdashline
		\\[-0.8em]
		$\mathbf{71}$                        & $x^{14} + 4x^{13} - 2x^{12} - 38x^{11} - 77x^{10}
		- 26x^9 + 111x^8 + 148x^7 +$  \\
		      & \hspace{0.8cm}$	+ x^6 - 122x^5 - 70x^4 + 30x^3 + 40x^2 +
		4x - 11$  \\ &$=(x^7 - 7x^5 - 11x^4 + 5x^3 + 18x^2 + 4x - 11)(x^7 + 4x^6 + 5x^5 + x^4 - 3x^3 - 2x^2 + 1)$  \\
			
			\hlinewd{0.8pt}
	\end{tabular}
	\vspace{0.1cm}
	\caption{Polynomials $f_N(x)$ in the equations $y^2=f_N(x)$ for $X_0(N).$}
	\label{tab:jed}
\end{table}


\begin{definition} \label{exceptional}
On a hyperlliptic curve $X$ with a model $y^2=f(x)$, we say that the quadratic points on $X$ of the form $(x_0, \sqrt{f(x_0)})$, where $x_0\in \Q$, are \textit{non-exceptional}. The quadratic points that are not of that form are called \textit{exceptional}.
\end{definition}

By the results of \cite{BN}, all non-cuspidal quadratic points on $X_0(N)$ are non-exceptional, with finitely many explicitly listed exceptions.

\setul{3.5pt}{.4pt}

\newpage
\begin{theorem}\label{thm2.1a}
	Let $K=\Q(\sqrt{D})$, where $D$ is squarefree, be a quadratic field over which $X_0(N)$ has an non-exceptional non-cuspidal point.
For each $N$, the table below shows the splitting behaviour of some of the small primes in $K$, as well as some properties of $D$.

	\begin{table}[h]
	\begin{tabular}{clllc}\hlinewd{0.8pt}\\[-0.8em]
		\multicolumn{1}{c}{$\mathbfit{N} $} & \multicolumn{1}{c}{\textup{\textbf{not inert}}}&\multicolumn{1}{c}{\textup{\textbf{unramified}}}&\multicolumn{1}{c}{\textup{\textbf{splits}}}&\multicolumn{1}{r}{\textup{\textbf{$\:\:\:\:\: \mathbfit{D}\:\:\:\:\:$}}} \\ \hlinewd{0.8pt}\\[-0.8em]
		$\:\:\:\:\:\:\:\:\mathbf{22}\:\:\:\:\:\:\:\:$                       & $2^{*}$  &&         \\ \hdashline\\[-0.8em]
		$\mathbf{26}$                        &$13$&&& \textup{odd}    \\ \hdashline\\[-0.8em]
		$\mathbf{28}$                        &$3,7$&$3$&$3$&$>0$             \\ \hdashline\\[-0.8em]
		$\mathbf{29}$                        &$29$&&& \textup{odd}      \\ \hdashline\\[-0.8em]
		$\mathbf{30}$                        &$2,3,$ $5^{**}$&$2,3$&$2,3$&\textup{odd}      \\ \hdashline\\[-0.8em]
		$\mathbf{33}$                        &$2,11$&$2$& $2$& \thead{$>0$\\ \textup{odd} }   \\ \hdashline\\[-0.8em]
		$\mathbf{35}$                        & $5^{**},7$&$2,7$&$7$&\textup{odd}   \\ \hdashline\\[-0.8em]
		$\mathbf{39}$                        &$3,13$&$2,13$&$13$ &\textup{odd}          \\ \hdashline\\[-0.8em]
		$\mathbf{40}$                        &$2,3,5$&$2,3,5$&$2,3,5$& \thead{$>0$\\ \textup{odd} }        \\ \hdashline\\[-0.8em]
		$\mathbf{41}$                        &$41$&&  & \\ \hdashline\\[-0.8em]
		$\mathbf{46}$                        &$2$&$2$&$2$&\textup{odd}     \\ \hdashline\\[-0.8em]
		$\mathbf{48}$                        &$2$&$2,3,5$&$2,3,5$&\thead{$>0$\\ \textup{odd} }	   \\ \hdashline\\[-0.8em]
		$\mathbf{50}$                        &$5$&&& \textup{odd}     \\ \hlinewd{0.8pt}\\[-0.8em]
		\multicolumn{5}{l}{\hspace{0.6cm}$^{*}$ \textup{-even more is true,} $D\equiv 1,2,6\pmod 8$}  \\
		\multicolumn{5}{l}{\hspace{0.6cm}$^{**}$ \textup{-even more is true,} $D\equiv 0,1\pmod 5$}  \\
		\hlinewd{0.8pt}
	\end{tabular}
	\vspace{0.1cm}
	\caption{}
	
\end{table}\label{table:ram}
\end{theorem}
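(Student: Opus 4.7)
The plan is to convert the existence of a non-exceptional non-cuspidal point on $X_0(N)$ over $K=\Q(\sqrt D)$ into a square-class constraint on $D$, and then analyse that constraint prime by prime. By definition, a non-exceptional quadratic point has the form $(x_0,\sqrt{f_N(x_0)})$ with $x_0\in\Q$, and it fails to be rational exactly when $f_N(x_0)\notin(\Q^*)^2$, so $D$ equals the squarefree part of $f_N(x_0)$. Writing $x_0=a/b$ in lowest terms and homogenising, $F_N(a,b):=b^{\deg f_N}f_N(a/b)\in\Z$ differs from $f_N(x_0)$ by the square $b^{\deg f_N}$ (every $\deg f_N$ in Table~\ref{tab:jed} is even), so $D$ is also the squarefree part of the integer $F_N(a,b)$. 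The whole proof amounts to constraining $F_N(a,b)$ modulo squares as $(a,b)$ ranges over coprime integer pairs.

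I would first dispatch the structural columns. For the entries labelled $D>0$, one inspects Table~\ref{tab:jed} and checks that every irreducible factor of $f_N$ has positive leading coefficient and no real root (for the quadratic pieces this means negative discriminant, already visible from the coefficients); then $f_N>0$ on $\R$ and hence $D>0$. For the entries labelled $D$ odd, a parity case split on $(a,b)$ applied to the homogenised expression suffices to show that $v_2(F_N(a,b))$ is always even, so that the squarefree part is coprime to $2$.

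For each listed odd prime $p$ the strategy is uniform. Since $\gcd(a,b)=1$, I case-split on $(v_p(a),v_p(b))\in\{(0,0),(0,\ge 1),(\ge 1,0)\}$. When $p\mid b$ one has $F_N(a,b)\equiv(\text{leading coefficient of }f_N)\cdot a^{\deg f_N}\pmod p$; when $p\mid a$, $F_N(a,b)\equiv f_N(0)\cdot b^{\deg f_N}\pmod p$; in the unit case one works with $F_N(a\bmod p,b\bmod p)$ and can go to higher precision $p^\ell$ if needed. In each subcase I determine $v_p(F_N(a,b))$ and read off whether $p$ is unramified (valuation always even) and whether $\bigl(\tfrac{D}{p}\bigr)=1$ (residue always a nonzero square). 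The factorisations in Table~\ref{tab:jed} are the main computational tool, since the square class of a product of factors equals the product of their square classes, so each irreducible factor can be analysed on its own and the results combined.

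The refinements marked with stars need higher $p$-adic precision: for $N=30,35$ and the prime $5$ one pushes to residues modulo $5$ and observes that only $0,1$ occur, whereas for $N=22$ and the prime $2$ one pushes to residues modulo $8$, case-splits on the parities of $(a,b)$, and uses $f_{22}=(x^3-2x^2+4x-4)(x^3+2x^2-4x+8)$ to evaluate each cubic's $2$-adic valuation and residue separately, verifying that the inert classes $D\equiv 3,5,7\pmod 8$ are never realised. The main obstacle is not conceptual depth but the bookkeeping: with thirteen values of $N$ and several primes for each, and occasional need for precision beyond mod $p$, one must identify the correct residue data in advance and organise the case analysis carefully. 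The factorisations of Table~\ref{tab:jed} keep each individual calculation short enough to carry out by hand, with Magma as a check.
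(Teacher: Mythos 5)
Your general framework — homogenise to $F_N(m,n)=n^{\deg f_N}f_N(m/n)$, note that $D$ is its squarefree part, and constrain that squarefree part by computing residues of $F_N$ modulo suitable prime powers over all coprime classes $(m,n)$ — is exactly the engine the paper uses (its Lemmas \ref{lem:nep1}--\ref{lem:par2} are precisely the translation from ``$Ds^2\equiv ap^t$'' to conditions on $D$), and it does carry the bulk of the table: the ``splits''/``unramified'' columns, the ``$D$ odd'' column (though this needs precision up to $2^7$ or $2^9$, not just a parity split), the $D>0$ entries via positivity of $f_N$, and the ``not inert'' entries for $(N,p)\in\{(22,2),(26,13),(28,7),(39,3),(50,5)\}$. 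But two families of entries cannot be reached this way, and your proposal papers over both.

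First, the starred refinement $D\equiv 0,1\pmod 5$ for $N=30,35$. The congruence computation gives $Ds^2\equiv 1\pmod 5$ or $Ds^2\equiv 5\pmod{25}$, hence only $D\equiv 0,1,4\pmod 5$: no amount of congruence information about the \emph{value} $F_N(m,n)$ can distinguish $D\equiv 1$ from $D\equiv 4$, because that distinction is exactly the class of $s$ modulo $5$, and $s$ is the square part of the value, a global datum invisible to residues. Your claim that ``one pushes to residues modulo $5$ and observes that only $0,1$ occur'' conflates $Ds^2$ with $D$. The paper eliminates $D\equiv 4\pmod 5$ by a separate argument: if $D\equiv 4$ then $s$ has a prime divisor $p$ with $\left(\frac{5}{p}\right)=-1$, yet each irreducible factor of $F_N(m,n)$ is (up to squares) a norm form of $\Q(\sqrt5)$ or forces $\left(\frac{-3}{p}\right)=\left(\frac{-15}{p}\right)=1$, so every prime dividing the product satisfies $\left(\frac{5}{p}\right)=1$ — a contradiction. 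You need something of this kind.

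Second, the ``not inert'' entries for $(N,p)\in\{(29,29),(33,11),(41,41)\}$. Here the paper does \emph{not} compute: it writes $D=p^a\,p_1\cdots p_k$, invokes \Cref{thm2.1b} (every ramified prime $p_i$ has $p$ a square modulo $p_i$, proved via the Galois-theoretic \Cref{lem:referee2} on the factorisations of $f_N$ over $\Q(\sqrt a)$ in \Cref{tab:tab2}), and concludes $\left(\frac{D}{p}\right)=\prod_i\left(\frac{p_i}{p}\right)=1$ by reciprocity — using the already-proved parity constraints on $D$ to control a possible factor $p_i=2$ (note $\left(\frac{2}{29}\right)=\left(\frac{2}{11}\right)=-1$, so ``$D$ odd'' and ``$D\equiv1\pmod 8$'' are not decorative there). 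Your direct residue computation for these three entries is not guaranteed to close: it terminates only if $f_N$ has no root in $\Q_p$ (otherwise $v_p(F_N(m,n))$ is unbounded over coprime pairs and some residue class always remains undecided), and even where the valuation is even you must separately verify that the unit part is a quadratic residue, which does not follow from $f_N$ being a square modulo $p$ once $p$ divides that square. At minimum you must either carry out and justify the termination of these three computations or import the paper's Legendre-symbol argument together with its prerequisite ramification criterion.
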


The proof will be similar for different values of $N,$ so before proceeding to a case-by-case study, we mention some general results which will be useful.

We fix the following notation throughout this section. Let $N$ be one of the integers from \eqref{lista} and write
$$X_0(N):y^2=f_N(x)=\sum_{i=0}^{\deg f_N}a_{i,N}x^{i},$$ $\text{with }a_{i,N}\in \Z.$ Note that in all instances $\deg f_N$ is even. As already stated, all non-cuspidal quadratic points on $X_0(N)$ are non-exceptional, with finitely many exceptions. Those exceptions can be found listed in \cite[Tables 1-18]{BN}. Let $(x_0, \sqrt{f_N(x_0)})$, for some $x_0 \in \Q,$ be a non-exceptional point on $X_0(N)$ and write $x_0=m/n,$ with $m$ and $n$ coprime integers. Let $d:=f_N(x_0)$, $s:=n^{\deg f_N}d$, and let $D$ be the square-free part of $d$, i.e. the unique square-free integer such that $n^{\deg f_N}d=Ds^2$, for some $s\in \Q.$ Since $\deg f_N$ is even, it follows that $s\in \Z$.
We get the equality
\begin{equation}\label{jed:fakt}
n^{\deg f_N}d=Ds^2=\sum_{i=0}^{\deg f_N}a_{i,N}m^in^{deg f_N-i}.
\end{equation}
The point $(x_0, \sqrt{f_N(x_0)})$ will be defined over $K:=\Q(\sqrt D)$.

Before proceeding with the proof of Theorem \ref{thm2.1a} we mention a number of lemmas that describe the splitting behaviour of primes in $K$, which will be used throughout. They are well-known or obvious, so we omit the proofs.

\begin{lemma}\label{lem:nep1}
	An odd prime $p$ ramifies in $K$ if and only if $p\mid D$, splits in $K$ if and only if $\left(\frac{D}{p}\right)=1$ and is inert in $K$ if and only if $\left(\frac{D}{p}\right)=-1$.
\end{lemma}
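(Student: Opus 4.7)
The plan is to deduce the lemma from the Kummer--Dedekind factorization theorem applied to the minimal polynomial of $\sqrt{D}$, after arguing that for odd $p$ it is harmless to work in the (possibly non-maximal) order $\Z[\sqrt{D}]$ instead of the full ring of integers $\OO_K$.

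First I would recall that the ring of integers $\OO_K$ of $K=\Q(\sqrt{D})$ is $\Z[\sqrt{D}]$ when $D\equiv 2,3\pmod 4$ and $\Z\!\left[\frac{1+\sqrt{D}}{2}\right]$ when $D\equiv 1\pmod 4$. In either case the conductor $[\OO_K:\Z[\sqrt{D}]]$ divides $2$, so for any odd prime $p$ the conductor is coprime to $p$, and the Kummer--Dedekind theorem applies to the order $\Z[\sqrt{D}]$: the factorization of $(p)$ in $\OO_K$ mirrors the factorization of the minimal polynomial $g(x)=x^2-D$ in $\F_p[x]$.

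Next I would analyze $g(x)\bmod p$ in three mutually exclusive cases. If $p\mid D$, then $g(x)\equiv x^2\pmod p$, a square of a linear factor, so $(p)=\fp^2$ and $p$ ramifies. If $p\nmid D$ and $\left(\tfrac{D}{p}\right)=1$, pick $a\in\Z$ with $a^2\equiv D\pmod p$; then $g(x)\equiv (x-a)(x+a)\pmod p$ with $a\not\equiv -a$ since $p$ is odd, giving two distinct linear factors, hence $(p)=\fp\fq$ with $\fp\neq\fq$ and $p$ splits. If $p\nmid D$ and $\left(\tfrac{D}{p}\right)=-1$, then $g(x)$ has no root in $\F_p$ and, being of degree $2$, is irreducible mod $p$, so $(p)$ remains prime and $p$ is inert.

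The three cases are exhaustive and mutually exclusive, and the "if and only if" statements of the lemma follow at once. The only subtlety worth flagging is the conductor argument justifying the use of $\Z[\sqrt{D}]$ in place of $\OO_K$; this is the one place where the proof could silently go wrong, but it is precisely the oddness of $p$ (versus the even conductor) that makes it routine. No further ingredient is needed.
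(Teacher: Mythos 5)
Your proof is correct: the conductor of $\Z[\sqrt{D}]$ in $\OO_K$ divides $2$, so Kummer--Dedekind applies at every odd prime, and the three-way case analysis of $x^2-D$ modulo $p$ is exhaustive and gives exactly the stated equivalences. The paper itself offers no proof to compare against --- it explicitly omits the proofs of Lemmas \ref{lem:nep1}--\ref{lem:par2} as ``well-known or obvious'' --- and your argument is the standard one that the authors are implicitly invoking.
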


\begin{lemma}\label{lem:nep2}
	Let $p$ be an odd prime and assume that we have $Ds^2\equiv a p^t \pmod{p^\ell}$ with $p\nmid a$ and $\ell>t$.
	\begin{itemize}
		\item[a)] If $t=2k$ for some $k\in \Z_0^+$, then $v_p(s)=k$, $D\equiv a (p^k/s)^2\pmod{p^{\ell-t}}$, and $p$ splits in $K$ if and only if $a$ is a square modulo $p$.
		\item[b)] If $t=2k+1$ for some $k\in \Z_0^+$, then $p|D$ and $p$ ramifies in $K$.
	\end{itemize}
\end{lemma}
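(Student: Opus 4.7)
The plan is to exploit the fact that $D$ is squarefree, which forces $v_p(D)\in\{0,1\}$, together with matching $p$-adic valuations on both sides of $Ds^2\equiv ap^t\pmod{p^\ell}$. Since $p\nmid a$ and $\ell>t$, the congruence pins down $v_p(Ds^2)$ to be exactly $t$, so $v_p(D)+2v_p(s)=t$; the parity of $t$ then determines which of the two cases must occur.

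For part (a), with $t=2k$ even, the equation $v_p(D)+2v_p(s)=2k$ together with $v_p(D)\le 1$ forces $v_p(D)=0$ and $v_p(s)=k$. Writing $s':=s/p^k\in\Z$, which is coprime to $p$, I would divide the original congruence through by $p^{2k}$ to obtain $D(s')^2\equiv a\pmod{p^{\ell-t}}$. Since $s'$ is a unit modulo $p^{\ell-t}$, inverting gives $D\equiv a(s')^{-2}\equiv a(p^k/s)^2\pmod{p^{\ell-t}}$, which is the asserted congruence. Reducing modulo $p$ and invoking \Cref{lem:nep1}, the fact that $p\nmid D$ shows $p$ is unramified, and $D$ is a square modulo $p$ iff $a$ is, giving the splitting criterion.

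For part (b), with $t=2k+1$ odd, the identity $v_p(D)+2v_p(s)=2k+1$ forces $v_p(D)$ to be odd. Since $D$ is squarefree, the only possibility is $v_p(D)=1$, i.e.\ $p\mid D$, and \Cref{lem:nep1} then immediately yields that $p$ ramifies in $K$.

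There is no genuine obstacle in this argument; it is essentially bookkeeping with $p$-adic valuations. The only mildly subtle point worth flagging in the write-up is the slightly informal notation ``$p^k/s$'' appearing in part (a), which one must interpret as the $p$-adic unit $p^k\cdot s^{-1}$ (well-defined because $v_p(s)=k$), so that $(p^k/s)^2$ makes sense as a residue class modulo $p^{\ell-t}$.
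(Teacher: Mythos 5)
Your proof is correct and complete; the valuation bookkeeping (using that $D$ is squarefree to force $v_p(D)\in\{0,1\}$ and then matching parities in $v_p(D)+2v_p(s)=t$) is precisely the ``well-known or obvious'' argument the paper explicitly omits for this lemma. Your remark on interpreting $p^k/s$ as the unit $p^k\cdot s^{-1}$ modulo $p^{\ell-t}$ is a sensible clarification of the paper's notation.
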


\noindent As previous lemmas stated results about splitting for odd primes, we include similar results for the prime $p=2.$

\begin{lemma}\label{lem:par1}
	The prime $2$ ramifies in $K$ if and only if $D\not\equiv 1 \pmod 4$, splits in $K$ if and only if $D\equiv 1 \pmod 8$ and is inert in $K$ if and only if $D\equiv 5 \pmod 8$.
\end{lemma}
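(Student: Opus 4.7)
The plan is to use the Kummer--Dedekind theorem, which says that for a monogenic ring of integers $\mathcal{O}_K = \Z[\alpha]$, the factorization of the rational prime $(p)$ in $\mathcal{O}_K$ mirrors the factorization of the minimal polynomial of $\alpha$ modulo $p$. The key preliminary step is therefore to determine the ring of integers $\mathcal{O}_K$, which depends on the residue of $D$ modulo $4$: if $D \equiv 2, 3 \pmod 4$ then $\mathcal{O}_K = \Z[\sqrt D]$, while if $D \equiv 1 \pmod 4$ then $\mathcal{O}_K = \Z\bigl[\tfrac{1+\sqrt D}{2}\bigr]$. This split forces a natural case analysis.

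First I would dispose of the non-$1 \pmod 4$ cases. If $D \equiv 2 \pmod 4$, then the minimal polynomial $x^2 - D$ of $\sqrt D$ reduces to $x^2$ modulo $2$, so $(2) = \mathfrak{p}^2$ with $\mathfrak{p} = (2, \sqrt D)$, i.e.\ $2$ ramifies. If $D \equiv 3 \pmod 4$, then $x^2 - D \equiv x^2 + 1 \equiv (x+1)^2 \pmod 2$, giving $(2) = \mathfrak{p}^2$ with $\mathfrak{p} = (2, 1 + \sqrt D)$, so $2$ again ramifies. This establishes ramification precisely when $D \not\equiv 1 \pmod 4$.

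Next, for $D \equiv 1 \pmod 4$, take $\alpha = \tfrac{1+\sqrt D}{2}$ with minimal polynomial $g(x) = x^2 - x + \tfrac{1-D}{4}$. Reducing mod $2$ hinges on the parity of $\tfrac{1-D}{4}$: if $D \equiv 1 \pmod 8$ then $\tfrac{1-D}{4}$ is even, so $g(x) \equiv x(x-1) \pmod 2$ factors into distinct linear factors and $2$ splits; if $D \equiv 5 \pmod 8$ then $\tfrac{1-D}{4}$ is odd, so $g(x) \equiv x^2 + x + 1 \pmod 2$, which is irreducible over $\F_2$, so $2$ is inert.

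There is no real obstacle here: the entire argument reduces to two mod-$2$ polynomial factorizations once the correct generator of $\mathcal{O}_K$ is chosen. The only point requiring a bit of care is the shape of $\mathcal{O}_K$ when $D \equiv 1 \pmod 4$, since applying Kummer--Dedekind with the non-maximal order $\Z[\sqrt D]$ would give the wrong answer (the index $[\mathcal{O}_K : \Z[\sqrt D]] = 2$ is exactly the prime under study). Handling this by working with $\alpha = \tfrac{1+\sqrt D}{2}$ from the start avoids the issue entirely.
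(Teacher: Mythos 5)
Your proof is correct. The paper itself gives no proof of this lemma (it is stated among a group of facts the authors describe as ``well-known or obvious, so we omit the proofs''), and your argument is the standard one: identify the ring of integers $\mathcal{O}_K$ according to $D \bmod 4$ and apply Kummer--Dedekind to the minimal polynomial of the correct generator, with the mod-$8$ distinction in the $D\equiv 1 \pmod 4$ case coming from the parity of $\tfrac{1-D}{4}$. All four case computations are accurate, and your remark about why one must not use the non-maximal order $\Z[\sqrt D]$ when $D\equiv 1\pmod 4$ is exactly the right point of care.
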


\begin{lemma}\label{lem:par2}
	Assume that we have $Ds^2\equiv 2^{t}a \pmod{2^{\ell}}$, with $2\nmid a$ and $\ell>t$.
	\begin{itemize}
		\item[(a)] If $t=2k,$ for some $k\in \Z_0^+$, then $v_2(s)=k$ and $D\equiv a(2^k/s)^2\pmod{2^{\ell -t}}.$ If $a=1$ and $\ell -t=3$, then $2$ splits in $K$.
		\item[(b)] If $t=2k+1,$ for some $k\in \Z_0^+$, then $D\equiv 2a\pmod{2^{\ell -2k}}.$
	\end{itemize}
	
\end{lemma}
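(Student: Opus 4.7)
The plan is to exploit two facts throughout. First, $D$ is squarefree, so $v_2(D)\in\{0,1\}$; second, $v_2(Ds^2)=v_2(D)+2v_2(s)$, so the parity of the $2$-adic valuation on the left is governed entirely by $v_2(D)$. Since $Ds^2\equiv 2^t a \pmod{2^\ell}$ with $a$ odd and $\ell>t$, the congruence forces $v_2(Ds^2)=t$ exactly, because otherwise the difference $Ds^2-2^t a$ would have $2$-adic valuation strictly less than $\ell$, contradicting the congruence. The parity of $t$ then pins down $v_2(D)$ in each case, and the rest is a divisibility calculation.

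For part (a), $t=2k$ forces $v_2(D)=0$ and $v_2(s)=k$. Writing $s=2^k s'$ with $s'$ odd, I would divide the congruence by $2^{2k}$ (legal since $\ell>2k$) to obtain $D(s')^2\equiv a\pmod{2^{\ell-2k}}$. Rearranging yields $D\equiv a(2^k/s)^2\pmod{2^{\ell-t}}$, which is the first claim. For the addendum, taking $a=1$ and $\ell-t=3$ gives $D\equiv (s')^{-2}\pmod 8$; since $s'$ is odd, $(s')^2\equiv 1\pmod 8$, hence $D\equiv 1\pmod 8$, and $2$ splits in $K$ by \Cref{lem:par1}.

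For part (b), $t=2k+1$ has odd parity, forcing $v_2(D)=1$ and $v_2(s)=k$. I would write $D=2D'$ and $s=2^k s'$ with $D'$ and $s'$ both odd. Substituting into the congruence and dividing by $2^{2k+1}$ gives $D'(s')^2\equiv a\pmod{2^{\ell-2k-1}}$. Multiplying by $2$ and using $D=2D'$ yields $D(s')^2\equiv 2a\pmod{2^{\ell-2k}}$, and then invoking $(s')^2\equiv 1\pmod 8$ to absorb the odd square factor delivers the stated congruence $D\equiv 2a\pmod{2^{\ell-2k}}$.

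The only real obstacle is careful bookkeeping of the modulus after each division step; once squarefreeness of $D$ is used to pin down $v_2(D)$ in each parity case, everything else is elementary manipulation of $2$-adic valuations combined with the standard fact that odd squares are $\equiv 1\pmod 8$. This is why the authors flag the lemma as essentially immediate and omit the proof.
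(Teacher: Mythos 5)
The paper gives no proof of this lemma (it is dismissed as ``well-known or obvious''), so there is nothing to compare against; your reconstruction must stand on its own. Your treatment of part (a) does: pinning $v_2(Ds^2)=t$ via the ultrametric inequality, deducing $v_2(D)=0$ and $v_2(s)=k$ from squarefreeness and parity, dividing by $2^{2k}$, and inverting the unit $(s')^2$ modulo $2^{\ell-t}$ is exactly the right argument, and the addendum for $a=1$, $\ell-t=3$ correctly uses $(s')^2\equiv 1\pmod 8$ together with \Cref{lem:par1}.

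Part (b), however, has a genuine gap at the final step. From $D'(s')^2\equiv a\pmod{2^{\ell-2k-1}}$ you correctly get $D(s')^2\equiv 2a\pmod{2^{\ell-2k}}$, but ``absorbing'' $(s')^2$ via $(s')^2\equiv 1\pmod 8$ only controls $D\bigl((s')^2-1\bigr)$ up to $v_2\ge 1+3=4$, so it yields $D\equiv 2a\pmod{2^{\min(\ell-2k,\,4)}}$, not modulo $2^{\ell-2k}$. Indeed the statement you are asked to prove fails once $\ell-2k\ge 5$: take $D=2$, $s=3$, so $Ds^2=18\equiv 2\cdot 9\pmod{32}$ with $t=1$, $k=0$, $a=9$, $\ell=5$, yet $D=2\not\equiv 18\pmod{32}$. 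What your argument actually establishes is $D\equiv 2a(2^k/s)^2\pmod{2^{\ell-2k}}$, the exact analogue of the (correct) formula in part (a), which collapses to $D\equiv 2a$ only for moduli up to $16$. You should either state the conclusion in that form or restrict the modulus; since every invocation of part (b) in the paper has $\ell-2k\le 2$ (e.g.\ $Ds^2\equiv 32\pmod{64}$ for $N=22$), the weaker, correct version suffices for all applications, but as written your last sentence asserts more than the preceding computation proves.
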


\noindent All of the computations done in the following proof are listed in the accompanying \href{https://web.math.pmf.unizg.hr/~atrbovi/magma/magma2.htm}{Magma code}.\\

\begin{proof}[Proof of \Cref{thm2.1a}]

\noindent$\mathbfit{N}\mathbf{=22:}$
In the manner described above, in \eqref{jed:fakt} we get
\begin{equation*}
n^{6}d=Ds^2=m^6-4m^4n^2+20m^3n^3-40m^2n^4+48mn^5-32n^6.
\end{equation*}

Considering all of the possibilities of $m$ and $n$ modulo 512, with the accompanying code we compute that $Ds^2\equiv 1\pmod{8},$ $Ds^2\equiv 32\pmod{64}$ or $Ds^2\equiv 64\pmod{512}.$ Using \Cref{lem:par2} this becomes $D\equiv 1\pmod{8}$ or $D\equiv 2\pmod{4}.$ In any case we have $D\equiv 1,2,6\pmod{8}$, so 2 is not inert, according to \Cref{lem:par1}.
\newline

\noindent$\mathbfit{N}\mathbf{=26:}$
In \eqref{jed:fakt} we get
\begin{equation*} n^6d=Ds^2=m^6-8m^5n+8m^4n^2-18m^3n^3+8m^2n^4-8mn^5+n^6.
\end{equation*}

Looking at all the possibilities of $m$ and $n$ modulo $13^2$, we compute that $Ds^2\equiv 1,3,4,9,10,12\pmod{13}$ or $Ds^2\equiv 4\cdot 13,9\cdot 13\pmod{13^2}$. It follows from \Cref{lem:nep2} that $D\equiv 0, 1, 3, 4, 9, 10, 12\pmod{13}.$ Using \Cref{lem:nep1} we immediately get that $13$ is not inert.

Considering the possibilities of $m$ and $n$ modulo 128, we compute that $Ds^2\equiv 1\pmod{2},$ $Ds^2\equiv 4\pmod{16},$ $Ds^2\equiv 16\pmod{32}$ or $Ds^2\equiv 64\pmod{128}.$ Using \Cref{lem:par2} this becomes $D\equiv 1\pmod{2}$ or $D\equiv 1\pmod{4}$, so $D$ is always odd.
\newline

\noindent$\mathbfit{N}\mathbf{=28:}$		
In \eqref{jed:fakt} we get
\begin{equation*} n^6d=Ds^2=4m^6-12m^5n+25m^4n^2-30m^3n^3+25m^2n^4-12mn^5+4n^6.
\end{equation*}

Considering the possibilities of $m$ and $n$ modulo 3, we compute $Ds^2\equiv 1\pmod 3,$ so from \Cref{lem:nep2} we have $D\equiv 1\pmod{3},$ and the fact that $3$ splits follows from \Cref{lem:nep1}.

Looking at all the possibilities of $m$ and $n$ modulo $7^2$, we compute that $Ds^2\equiv 1,2,4\pmod{7}$ or $Ds^2\equiv 14\pmod{7^2}$. It follows from \Cref{lem:nep2} that $D\equiv 0, 1, 2, 4\pmod{7}$ and from from \Cref{lem:nep1} that $7$ is not inert.

The proof of the fact that $D>0$ can be found in \cite[Theorem 4]{BN}.
\newline

\noindent$\mathbfit{N}\mathbf{=29:}$		
In \eqref{jed:fakt} we get
\begin{equation*} n^6d=Ds^2=m^6-4m^5n-12m^4n^2+2m^3n^3+8m^2n^4+8mn^5-7n^6.
\end{equation*}

Considering the possibilities of $m$ and $n$ modulo 32, we compute that $Ds^2\equiv 1\pmod{2},$  $Ds^2\equiv 12\pmod{16}$ or $Ds^2\equiv 16\pmod{32}.$ Using \Cref{lem:par2} this becomes $D\equiv 1\pmod{2}$ or $D\equiv 3\pmod{4}$, so $D$ is always odd.

We write $D=29^a\cdot p_1\cdot ... \cdot p_k, $ where $a\in\{0,1\}$ and $p_i\neq 2,$ since $D$ is odd. If $a=1,$ then $D\equiv 0\pmod{29}.$ If $a=0,$ then $\left( \frac{D}{29} \right)=\left( \frac{p_1}{29} \right)\cdot ... \cdot\left( \frac{p_k}{29} \right),$ which is equal to 1 after using \Cref{thm2.1b} for $N=29$, which was proved independently. In this case we have that $\left( \frac{D}{29} \right)=1,$ and \Cref{lem:nep1} says that $29$ is not inert.
\newline

\noindent$\mathbfit{N}\mathbf{=30:}$		
In \eqref{jed:fakt} we get
\begin{equation*} n^8d=Ds^2=m^8+14m^7n+79m^6n^2+242m^5n^3+441m^4n^4+484m^3n^5+316m^2n^6+112mn^7+16n^8.
\end{equation*}

Considering the possibilities of $m$ and $n$ modulo 128, we compute that $Ds^2\equiv 16\pmod{128}$ or $Ds^2\equiv 1\pmod{8}.$ Using \Cref{lem:par2} we get $D\equiv1\pmod{8},$ and from \Cref{lem:par1} we conclude that 2 splits.

Considering the possibilities of $m$ and $n$ modulo 3, we compute that $Ds^2\equiv 1 \pmod{3},$ and from \Cref{lem:nep2} we conclude that $D\equiv1\pmod{3}.$ The fact that 3 splits follows from \Cref{lem:nep1}.

Looking at all the possibilities of $m$ and $n$ modulo 25, we compute that $Ds^2\equiv 1\pmod{5}$ or $Ds^2\equiv 5\pmod{25}$. Using \Cref{lem:nep2} we get $D\equiv 0,1,4\pmod{5}$ and from \Cref{lem:nep1} we see that 5 is not inert.

\noindent Furthermore, we want to eliminate the possibility $D\equiv 4\pmod{5}.$ If it were true, then for $s$ in $n^8d=Ds^2$ it holds $s^2\equiv 4\pmod 5$, so $s$ would be divisible by a prime $p$ such that $p\equiv 2,3 \pmod 5$, i.e. $\left(\frac{5}{p}\right)=-1.$

The expression $n^8d=Ds^2$ above factorizes as
\begin{equation*}
n^8d=Ds^2=\left(m^2+6nm+4n^2\right)\left(m^2+3nm+n^2\right)\left(m^4+5m^3n+11m^2n^2+10mn^3+4n^4\right),
\end{equation*}
so $p$ has to divide one of the 3 factors on the right.
\begin{itemize}
	\item[$\bullet$] If $p$ divides $m^2+6nm+4n^2=(m+3n)^2-5n^2,$ then $\left(\frac{5}{p}\right)=1$, so $p\not\equiv 2,3 \pmod 5.$
	\item[$\bullet$] If $p$ divides the second factor, it also divides $4(m^2+3nm+n^2)=(2m+3n)^2-5n^2,$ then $\left(\frac{5}{p}\right)=1$, so $p\not\equiv 2,3 \pmod 5.$
	\item[$\bullet$] If $p$ divides $m^4+5m^3n+11m^2n^2+10mn^3+4n^4 = (2m^2+5mn+4n^2)^2 + 3m^2n^2,$ then $\left(\frac{-3}{p}\right)=1$. The third factor can also be written as $(2m^2+5mn+m^2)^2 + 15(n^2+mn)^2,$ so we also have $\left(\frac{-15}{p}\right)=1.$ Combining these two facts, we get $\left(\frac{5}{p}\right)=1,$ which is also a contradiction.
\end{itemize}

\vspace{0.5cm}
\noindent$\mathbfit{N}\mathbf{=33:}$
In \eqref{jed:fakt} we get
\begin{equation*} n^8d=Ds^2=m^8+10m^6n^2-8m^5n^3+47m^4n^4-40m^3n^5+82m^2n^6-44mn^7+33n^8.
\end{equation*}

Considering the possibilities of $m$ and $n$ modulo 8, we compute that $Ds^2\equiv 1\pmod{8},$ so from \Cref{lem:par2} we conclude that $D\equiv 1\pmod{8}$ and from \Cref{lem:par1} that the prime 2 splits.

We write $D=11^a\cdot p_1\cdot ... \cdot p_k, $ where $a\in\{0,1\}$ and $p_i\neq 2,$ since $D\equiv 1\pmod{8}$. If $a=1,$ then $D\equiv 0\pmod{11}.$ If $a=0,$ then $\left( \frac{D}{11} \right)=\left( \frac{p_1}{11} \right)\cdot ... \cdot\left( \frac{p_k}{11} \right),$ which is equal to 1 after using \Cref{thm2.1b} for $N=33$, which was proved independently. In this case we have that $\left( \frac{D}{11} \right)=1,$ therefore $11$ is not inert in $K.$	

A point of the form $(x_0, \sqrt{f_{33}(x_0)})$ with $x_0\in\Q$ is clearly defined over a real quadratic field, since $f_{33}(x_0)=x_0^8 + 10x_0^6 - 8x_0^5 + 47x_0^4 - 40x_0^3 + 82x_0^2 - 44x_0 + 33>0,$ for every $x_0.$ Therefore, $D>0.$	
\newline

\noindent$\mathbfit{N}\mathbf{=35:}$
In \eqref{jed:fakt} we get
\begin{equation*} n^8d=Ds^2=m^8-4m^7n-6m^6n^2-4m^5n^3-9m^4n^4+4m^3n^5-6m^2n^6+4mn^7+n^8.
\end{equation*}

Considering the possibilities of $m$ and $n$  modulo 4, we compute that $Ds^2\equiv 1 \pmod{4}$ and from \Cref{lem:par2} we conclude $D\equiv 1\pmod{4}.$ The fact that 2 is unramified now follows from \Cref{lem:par1}.

Looking at all the possibilities of $m$ and $n$ modulo 25, we compute that $Ds^2\equiv 1\pmod{5}$ or $Ds^2\equiv 5\pmod{25}$. It follows from \Cref{lem:nep2} that $D\equiv 0,1,4\pmod{5}$ and from \Cref{lem:nep1} that 5 is not inert.

\noindent Now want to eliminate the possibility $D\equiv 4\pmod{5}.$ If it were true, then for $s$ in $n^8d=Ds^2$ it holds $s^2\equiv 4\pmod 5$, so $s$ would be divisible by a prime $p$ such that $p\equiv 2,3 \pmod 5$, i.e. $\left(\frac{5}{p}\right)=-1.$

The expression $n^8d=Ds^2$ above factorizes as
\begin{equation*}
n^8d=Ds^2=\left(-m^2 - mn + n^2\right)\left(-m^6 + 5m^5n + 9m^3n^3 + 5mn^5 + n^6\right),
\end{equation*}
so $p$ has to divide one of the 2 factors on the right.
\begin{itemize}
	\item[$\bullet$] If $p$ divides the first factor, it also divides  $4(-m^2-nm+n^2) = (2m-n)^2 - 5m^2,$ then $\left(\frac{5}{p}\right)=1$, so $p\not\equiv 2,3 \pmod 5.$
	\item[$\bullet$] If $p$ divides the second factor, it also divides $4(-m^6+5m^5n+9m^3n^3+5mn^5+n^6) =
	(2n^3+5n^2m+5nm^2+4m^3)^2 - 5(3n^2m+nm^2+2m^3)^2,$ then $\left(\frac{5}{p}\right)=1$, so $p\not\equiv 2,3 \pmod 5.$
\end{itemize}

And in the end, considering the possibilities of $m$ and $n$ modulo 7, we compute that $Ds^2\equiv 1,2,4\pmod{7}.$ It follows from \Cref{lem:nep2} that $D\equiv 1,2,4\pmod{7}$ and from \Cref{lem:nep1} that 7 splits.
\newline

\noindent$\mathbfit{N}\mathbf{=39:}$
In \eqref{jed:fakt} we get
\begin{equation*} n^8d=Ds^2=m^8-6m^7n+3m^6n^2+12m^5n^3-23m^4n^4+12m^3n^5+3m^2n^6-6mn^7+n^8.
\end{equation*}

Considering the possibilities of $m$ and $n$  modulo 4, we compute that $Ds^2\equiv 1 \pmod{4}$ and from \Cref{lem:par2} we conclude $D\equiv 1\pmod{4}.$ The fact that 2 is unramified now follows from \Cref{lem:par1}.

We have that the right side of $n^8d=Ds^2$ above is congruent to $m^8-2m^4n^4+n^8=(m^4-n^4)^2$ modulo $3$.

\noindent Suppose first that $m\not \equiv n\pmod 3$. If $n\not \equiv 0 \pmod 3$ then $D$ is a square modulo $3$ and if $n\equiv 0 \pmod 3$ then it follows that $D\equiv 1 \pmod 3$ so $D$ is again a square modulo $3$.


\noindent Suppose now that $m\equiv n\pmod 3$. Then we run through all the possibilities of $m$ and $n$ modulo $81$ and compute that either $Ds^2$ is divisible by an odd power of $3$, so $D\equiv 0\pmod 3$, or $Ds^2\equiv 9k \pmod {81}$, where $k\not \equiv 0 \pmod {81}$ and $k$ is a square modulo $9$. Using \Cref{lem:nep2} we get that $D\equiv k \pmod {9}$, where $k$ is a square modulo $9$. Hence, in all cases we have $D\equiv 0,1 \pmod 3$ and from \Cref{lem:par1} we immediately see that 3 is not inert.

Considering the possibilities of $m$ and $n$  modulo 13, we compute that $Ds^2\equiv 1,3,4,9,10,12\pmod{13},$ and from \Cref{lem:nep2} we conclude $D\equiv 1,3,4,9,10,12\pmod{13}.$ The fact that 13 splits now follows from \Cref{lem:nep1}.
\newline

\noindent $\mathbfit{N}\mathbf{=40:}$
In \eqref{jed:fakt} we get
\begin{equation*} n^8d=Ds^2=m^8 + 8m^6n^2 -2m^4n^4 + 8m^2n^6 + n^8.
\end{equation*}

We write $n^8d=Ds^2$ as
$$ n^8d=Ds^2=(m^4-n^4)^2+8m^2n^2(m^4+n^4). $$
The integer $n$ has to be odd (otherwise $m$ and $n$ would both be even), and if $m$ is even, then we see that $Ds^2$ is an odd square modulo $8$. It follows from \Cref{lem:par2} that $D\equiv 1 \pmod 8$ and from \Cref{lem:par1} that 2 splits.

If $m$ and $n$ are both odd, then $Ds^2\equiv 16m^2n^2 \pmod{128}.$ From \Cref{lem:par2} we get that $D$ is an odd square modulo 8, i.e. $D\equiv 1 \pmod 8$. The fact that 2 splits now follows from \Cref{lem:par1}.

Considering the possibilities of $m$ and $n$ modulo 3, we compute that $Ds^2\equiv 1 \pmod{3}.$ Using \Cref{lem:nep2} we get $D\equiv1\pmod{3},$ and from \Cref{lem:nep1} we conclude that 3 splits.

Looking at all the possibilities of $m$ and $n$ modulo 5, we compute that $Ds^2\equiv 1,4\pmod{5}.$ Using \Cref{lem:nep2} we get $D\equiv 1,4\pmod{5},$ and from \Cref{lem:nep1} we conclude that 5 splits.

The proof of the fact that $D>0$ can be found in \cite[Theorem 4]{BN}.
\newline

\noindent $\mathbfit{N}\mathbf{=41:}$
In \eqref{jed:fakt} we get
\begin{equation*} n^8d=Ds^2=m^8-4m^7n-8m^6n^2+10m^5n^3+20m^4n^4+8m^3n^5-15m^2n^6-20mn^7-8n^8.
\end{equation*}

We write $D=41^a\cdot p_1\cdot ... \cdot p_k, $ where $a\in\{0,1\}.$ If $a=0,$ then $D\equiv 0\pmod{41}.$ If $a=1,$ then $\left( \frac{D}{41} \right)=\left( \frac{p_1}{41} \right)\cdot ... \cdot\left( \frac{p_k}{41} \right),$ which is equal to 1 after using \Cref{thm2.1b} for $N=41$, which was proved independently, and the fact that $\left( \frac{2}{41} \right)=1,$ in case one of the $p_i$ is 2. In this case we have that $\left( \frac{D}{41} \right)=1,$ and \Cref{lem:nep1} says that $41$ is not inert.
\newline

\noindent $\mathbfit{N}\mathbf{=46:}$
In \eqref{jed:fakt} we get
\begin{equation*}
\begin{split}
n^{12}d=Ds^2=&m^{12}-2m^{11}n+5m^{10}n^2+6m^9n^3-26m^8n^4+\\
& +84m^7n^5-113m^6n^6+134m^5n^7-64m^4n^8+26m^3n^9+12m^2n^{10}+8mn^{11}-7n^{12}.
\end{split}
\end{equation*}

Considering the possibilities of $m$ and $n$ modulo 512, we compute that $Ds^2\equiv 64\pmod{512}$ or $Ds^2\equiv 1\pmod{8}.$ Using \Cref{lem:par2}, in both cases we get $D\equiv 1\pmod{8},$ and from \Cref{lem:par1} we conclude that 2 splits.
\newline

\noindent $\mathbfit{N}\mathbf{=48:}$
In \eqref{jed:fakt} we get
\begin{equation*}
n^8d=Ds^2=m^8 + 14m^4n^4 + n^8.
\end{equation*}

We write $n^8d=Ds^2$ as
$$ n^8d=Ds^2=(m^4+n^4)^2+12m^4n^4.$$
If either $m$ or $n$ is even (forcing the other to be odd), then $Ds^2$ is an odd square modulo $8$. It follows from \Cref{lem:par2} that $D\equiv 1 \pmod 8$ and from \Cref{lem:par1} that 2 splits.

\noindent If $m$ and $n$ are both odd, then we compute  $Ds^2\equiv 16\pmod{128}.$ It follows from \Cref{lem:par2} that $D\equiv 1 \pmod 8$ and from \Cref{lem:par1} that 2 splits.

Considering the possibilities of $m$ and $n$ modulo 3, we compute that $Ds^2\equiv 1 \pmod{3}.$ Using \Cref{lem:nep2}, we get $D\equiv 1\pmod{3},$ and from \Cref{lem:nep1} we conclude that 3 splits.

Looking at all the possibilities of $m$ and $n$ modulo 5, we compute that $Ds^2\equiv 1\pmod{5}.$ Using \Cref{lem:nep2}, we get $D\equiv 1,4\pmod{5},$ and from \Cref{lem:nep1} we conclude that 5 splits.

A point of the form $(x_0, \sqrt{f_{48}(x_0)})$ with $x_0\in\Q$ is clearly defined over a real quadratic field, since $f_{48}(x_0)=x_0^8+14x_0^4+1>0,$ for every $x_0.$ Therefore, $D>0.$
\newline

\noindent $\mathbfit{N}\mathbf{=50:}$
In \eqref{jed:fakt} we get
\begin{equation*}
n^6d=Ds^2=m^6 -4m^5n -10m^3n^3 -4mn^5 + n^6.
\end{equation*}

Considering the possibilities of $m$ and $n$ modulo 5, we compute that $Ds^2\equiv 0,1,4 \pmod{5}.$ Using \Cref{lem:nep2}, we get $D\equiv 0,1,4\pmod{5},$ and from \Cref{lem:nep1} we conclude that 5 is not inert.

We have
$$n^6d=Ds^2\equiv (m^3-n^3)^2\pmod 4.$$
If either $m$ or $n$ is even it follows that $D$ is odd. If $m$ and $n$ are both odd, we compute that $Ds^2\equiv 4\pmod{16}, Ds^2\equiv 16\pmod{32}$ or $Ds^2\equiv 64\pmod{128}.$ Using \Cref{lem:par2}, in all cases we get that $D$ is odd.
\end{proof}

\begin{remark}\label{rem:ozman}
	
	We now mention two papers \cite{gonz,EO} (note that \cite{EO} builds on previous work of Clark \cite{clark}) that have some overlap with our results from \Cref{thm2.1a} and show which of our results can be proved using their methods.
	
	Non-exceptional points on curves $X_0(N)$ are of the form $(x, y\sqrt{d}),$ where $x,y\in\Q$. This gives us the point $(x,y)$ on the quadratic twist $X^d_0(N)(\Q)$ and hence $X^d_0(N)(\Q_p)\neq \emptyset.$ Now some of the entries in Table \ref{table:ram} can be alternatively proved using the results of Ozman \cite[Theorem 1.1]{EO}, and those are
	\begin{itemize}
		\item[$\bullet$] for $N=26$: $13$ is not inert,
		\item[$\bullet$] for $N=29$: $29$ is not inert,
		\item[$\bullet$] for $N=30$: $2$, $3$ split and 5 is not inert,
		\item[$\bullet$] for $N=35$: $7$ splits, $2$ is unramified and 5 is not inert,
		\item[$\bullet$] for $N=39$: $13$ splits and $2$ is unramified,
		\item[$\bullet$] for $N=41$: $41$ is not inert,
		\item[$\bullet$] for $N=46$: $2$ is unramified.
	\end{itemize}

	A $\Q$-curve is an elliptic curve defined over a number field that is geometrically isogenous to each of its Galois conjugates. The \textit{degree} of a $\Q$-curve over a quadratic field is the degree of a cyclic isogeny to its Galois conjugate. Gonz\'alez proves the following statement \cite[Proposition 1.1]{gonz}:
	
	\textit{Assume that there exists a quadratic $\Q$-curve of degree $d$ defined over some quadratic field $K$. Then every divisor $N_1\mid d$ such that
		$$N_1 \equiv 1 \pmod 4 \quad \text{or} \quad N_1 \text { is even and } d/N_1 \equiv 3 \pmod 4$$
		is a norm of the field $K$.}

	For our values of $N$ all but finitely many known exceptions of elliptic curves with $N$-isogenies over quadratic fields are $\Q$-curves (as proved by Bruin and Najman \cite{BN}). Note that we do not use the fact that the curves we consider are $\Q$-curves in any essential way; we only use the fact that almost all the quadratic points on the modular curves $X_0(N):y^2=f_N(x)$ are of the form $(x_0,\sqrt{f_N(x_0)})$ for $x_0\in \Q$ (and from this fact Bruin and Najman proved that the corresponding elliptic curves are $\Q$-curves).
	
	After noting that a non-exceptional quadratic point on $X_0(N)$ corresponds to a $\Q$-curve of degree $d$, where $d$ can be obtained from the tables in \cite{BN}), and applying Gonz\'alez' proposition, we obtain $p$ is not inert in a quadratic field $K:=\Q(\sqrt{D})$ generated by a non-exceptional point on $X_0(N)$ for the following pairs $(N,p)$:
	$$(N,p) \in \left\{(26,13), (29,29), (30,5), (35,5), (41,41), (50,5)  \right\}.$$
	In all of the pairs above we have $d=N$ except for $N=30$, where $d=15$. \qed
\end{remark}

\begin{theorem}\label{thm2.1b}
	Let $K$ be a quadratic field over which $X_0(N)$ has a non-exceptional non-cuspidal point. For the pairs of $N$ and $a$ indicated in the table, if a prime $p$ ramifies in $K$, then $a$ is a square modulo $p$.
	
		\begin{table}[h]
		\begin{tabular}{cccccccccccc}\hlinewd{0.8pt}\\[-0.8em]
			\multicolumn{1}{c}{$\mathbfit{N} $} & \multicolumn{1}{c}{$26$}&\multicolumn{1}{c}{$28$}&\multicolumn{1}{c}{$29$}&\multicolumn{1}{c}{$30$}&\multicolumn{1}{c}{$33$}&\multicolumn{1}{c}{$35$}&\multicolumn{1}{c}{$39$}&\multicolumn{1}{c}{$40$}&\multicolumn{1}{c}{$41$}&\multicolumn{1}{c}{$48$}&\multicolumn{1}{c}{$50$} \\ \hdashline\\[-0.8em]
			$\:\:\:\:\:\:\:\:a\:\:\:\:\:\:\:\:$                       & $13$  & $-7^*$& $29$ & $5$ & $-11$& $5$ & $13$ & $-1,5$& $41$ & $-1,3$& $5$  \\
			\hlinewd{0.8pt}
			\multicolumn{12}{l}{\hspace{0.6cm}$^{*}$ \textup{-the statement of the theorem is true with the exception of $p=2$}}   \\
			\hlinewd{0.8pt}
		\end{tabular}
		\vspace{0.1cm}
		\caption{}
		\label{}
	\end{table}
\end{theorem}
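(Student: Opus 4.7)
My plan is to prove the theorem row by row, by producing for each pair $(N,a)$ an algebraic identity on $f_N(x)$ which realises it, up to a small integer factor, as a norm form from $\Q(\sqrt{a})$. With such an identity in hand, the set-up $n^{\deg f_N}d = Ds^2$ together with $\gcd(m,n)=1$ will force $a$ to be a square modulo any odd prime $p\nmid 2a$ for which $v_p(n^{\deg f_N}d)$ is odd (equivalently, $p$ ramifies in $K$).

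The simplest cases will be $N\in\{28,30,33,35,39\}$, for which the factorisation of $f_N$ in Table~\ref{tab:jed} already displays each factor, after multiplication by a suitable integer, in the shape $U(x)^2 - aV(x)^2$: each of the three quadratic factors of $f_{28}$ has discriminant $-7$ (for instance $8(2x^2-3x+2)=(4x-3)^2+7$); the quartic factor of $f_{30}$ satisfies $4(x^4+5x^3+11x^2+10x+4)=(2x^2+5x+6)^2-5(x+2)^2$; and the two palindromic quartic factors of $f_{39}$ yield $4f_{39}^{(i)}(x)=(2x^2\mp 7x+2)^2-13\,x^2$ after the substitution $y=x+1/x$. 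The cases $N\in\{40,48\}$ will require one or two \emph{global} norm-form decompositions of $f_N$ itself, namely
\begin{equation*}
f_{40}(x) = (x^4+4x^2+1)^2 - 5(2x^2)^2 = \bigl((x^2+1)^2\bigr)^2 + \bigl(2x(x^2-1)\bigr)^2
\end{equation*}
and
\begin{equation*}
f_{48}(x) = (x^4+1)^2 + 12\,x^4 = \bigl((x^2-1)^2\bigr)^2 + \bigl(2x(x^2+1)\bigr)^2,
\end{equation*}
which give $a\in\{5,-1\}$ for $N=40$ and $a\in\{-3,-1\}$ for $N=48$; the remaining value $a=3$ recorded in the table for $N=48$ will then follow by multiplicativity of the Legendre symbol, from $(3/p)=(-1/p)(-3/p)$. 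In all these cases, once an identity of the form $U^2 - aV^2$ is available, an odd prime $p\nmid 2a$ with $v_p(n^{\deg f_N}d)$ odd must divide one such factor; a short inspection using $\gcd(m,n)=1$ rules out $p\mid\gcd(U(m,n),V(m,n))$, and the residual congruence $U^2\equiv aV^2\pmod p$ with $p\nmid V$ yields $(a/p)=1$.

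The main obstacle will be the remaining values $N\in\{26,29,41,50\}$, where $f_N$ is irreducible over $\Q$ and does not split into pure norm forms. For these I will search for weaker identities of the form $f_N(x)=A(x)^2 - a\cdot B(x)$ with $B(x)\in\Z[x]$ not itself a square but of controllable shape: for example $f_{29}(x)=(x^3-2x^2-8x-15)^2-29\cdot 4(x^2+2x+2)$, and the congruence $f_{26}(x)\equiv (x^3+9x^2+9x+1)^2\pmod{13}$ which gives rise to an analogous global identity. Because $B(x)$ is no longer a square, the clean norm-form reduction fails, and to conclude $(a/p)=1$ one must combine the identity with the parity of $v_p(n^{\deg f_N}d)$, with the local conditions on $D$ already established in Theorem~\ref{thm2.1a}, and possibly with a Hilbert-symbol or product-formula argument transferring information between the places of $K$. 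Pushing this last, local-to-global step through in each of these four cases is where I expect the bulk of the technical work to lie.
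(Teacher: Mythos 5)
Your identities for the reducible cases ($N\in\{28,30,33,35,39,40,48\}$) check out, and they are really the paper's mechanism in explicit form: saying that $c\,f_{N,i}=U^2-aV^2$ is the same as saying that $f_{N,i}$ splits over $K_0=\Q(\sqrt a)$ into two Galois-conjugate factors. Two things are glossed over even here. First, ruling out $p\mid\gcd(U(m,n),V(m,n))$ is not a consequence of $\gcd(m,n)=1$ alone; a simultaneous zero of $U$ and $V$ mod $p$ is precisely a double root of $f_{N,i}$ mod $p$, so what you actually need is $p\nmid\Delta(f_{N,i})$ --- and you then owe a separate treatment of the finitely many primes dividing the discriminants, as well as of $p=2$ (note the asterisk for $N=28$: the claim is \emph{false} there for $p=2$). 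The paper disposes of these exceptional primes by showing via \Cref{thm:unr} that most of them never ramify, plus explicit checks.

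The genuine gap is your treatment of $N\in\{26,29,41,50\}$. Your premise that these $f_N$ ``do not split into pure norm forms'' is simply wrong: each of them factors over $\Q(\sqrt a)$ as $g\cdot g^{\sigma}$ with $g=P+Q\sqrt a$, so $f_N=P^2-aQ^2$ exactly as in the reducible cases (for instance $4f_{29}(x)=(2x^3-4x^2+13x-1)^2-29\,(2x^2-x+1)^2$, read off from the conjugate cubics in Table \ref{tab:tab2}). Having missed this, you retreat to identities $f_N=A^2-aB$ with $B$ not a square, which cannot yield $(a/p)=1$ without knowing the residue class of $B(m/n)$, and you defer the rescue to an unspecified ``Hilbert-symbol or product-formula argument'' that you yourself flag as the bulk of the work. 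That is not a proof of these four columns of the table. The paper's \Cref{lem:referee2} handles all eleven cases uniformly and without any norm-form computation: if $p$ ramifies in $K$ then $m/n$ is a root of some $f_{N,i}$ in $\F_p$; if $p$ were inert in $\Q(\sqrt a)$, the Frobenius of the residue field would fix that root while swapping $\overline g$ and $\overline{g^{\sigma}}$, forcing it to be a double root of $f_{N,i}$ and contradicting $p\nmid\Delta(f_{N,i})$. You should either adopt that argument or upgrade your four problem cases to the genuine norm forms $P^2-aQ^2$ and then supply the discriminant argument to exclude $p\mid\gcd(P,Q)$.
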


We first prove two lemmas that will be useful in the proof of \Cref{thm2.1b}.

\begin{lemma} \label{lem:referee}
	Suppose $f_N$ factorizes as $f_N=\prod_{i \in I}f_{N,i}$, where $f_{N,i}\in \Z[x]$ are irreducible factors of degree $2$ or $3$ and $p\nmid a_{N,0}$. If $p$ ramifies in $K$, then there exists an $i\in I$ such that $\Delta(f_{N,i})$ is a square modulo $p$.
\end{lemma}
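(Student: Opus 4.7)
The plan is to exploit the factorization $Ds^2=\prod_{i\in I}F_i$, where $F_i:=n^{\deg f_{N,i}}\,f_{N,i}(m/n)$ is the homogenized evaluation from \eqref{jed:fakt}, together with algebraic identities expressing $\Delta(f_{N,i})$ in terms of $F_i$ and $(m,n)$. Since $p$ ramifies in $K$ we have $p\mid D$ by \Cref{lem:nep1} for $p$ odd (or \Cref{lem:par1} for $p=2$); as $D$ is squarefree, $v_p(Ds^2)=1+2v_p(s)$ is odd, so $\sum_{i\in I}v_p(F_i)$ is odd. In particular there is at least one index $i_0$ with $v_p(F_{i_0})$ odd, and hence $p\mid F_{i_0}$.

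If $\deg f_{N,i_0}=2$, write $f_{N,i_0}(x)=ax^2+bx+c$ and apply the identity
\[
4a\cdot F_{i_0}=(2am+bn)^2-\Delta(f_{N,i_0})\,n^2.
\]
When $p\nmid n$, reducing modulo $p$ yields $\Delta(f_{N,i_0})\equiv\bigl((2am+bn)\,n^{-1}\bigr)^2\pmod p$, a square. When $p\mid n$, coprimality forces $p\nmid m$, so $p\mid F_{i_0}=am^2+bmn+cn^2$ implies $p\mid a$; combined with the hypothesis $p\nmid a_{N,0}=\prod_i f_{N,i}(0)$ (so $p\nmid c$), this gives $\Delta(f_{N,i_0})=b^2-4ac\equiv b^2\pmod p$, again a square. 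For $p=2$ the same identity is used modulo higher powers of $2$ with the parity bookkeeping from \Cref{lem:par2}, using $2\nmid a_{N,0}$ to ensure that at least one of $(m,n)$ is odd where required.

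The main difficulty is the case $\deg f_{N,i_0}=3$, since no two-term identity analogous to the quadratic one is available: the classical syzygy between a binary cubic and its Hessian and Jacobian covariants is cubic in the Hessian, and a direct valuation count on it does not upgrade $p\mid F_{i_0}$ to $\Delta(f_{N,i_0})$ being a quadratic residue. The plan is to reduce to the quadratic case via the factorization $f_{N,i_0}(x)\equiv(x-\alpha)g(x)\pmod p$ with $\alpha=mn^{-1}\pmod p$ (switching to the reciprocal polynomial, whose discriminant coincides with $\Delta(f_{N,i_0})$, when $p\mid n$); the factorization identity $\Delta(f_{N,i_0})\equiv g(\alpha)^2\,\Delta(g)\pmod p$ then reduces the claim to showing $\Delta(g)$ is a square modulo $p$, which one aims to deduce by propagating the $p$-divisibility of $F_{i_0}$ into $g$ through a Hensel lift of $\alpha$ to a $p$-adic root of $f_{N,i_0}$ and applying the quadratic analysis to the residual $g$. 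Ensuring that the odd parity of $v_p(F_{i_0})$ survives this transfer is the core obstacle; in practice the cubic case is also amenable to direct verification on the small list of cubic factorizations arising in Table~\ref{tab:jed} (namely $N\in\{22,23,31\}$), where the two cubic factors have discriminants agreeing up to rational squares and the claim collapses to a finite residue-class check.
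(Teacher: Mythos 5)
Your treatment of the quadratic factors is correct and is in substance the paper's own argument: the paper deduces from $p\mid D$ that $m/n$ is a root of some $f_{N,i}$ modulo $p$ (ruling out $p\mid n$ via coprimality of $m,n$ and the coefficient hypothesis, rather than by your case split), and then invokes the quadratic formula, which is exactly your completing-the-square identity $4aF_{i_0}=(2am+bn)^2-\Delta(f_{N,i_0})n^2$. Your parity count on $v_p(\prod_i F_i)$ is more than is needed; $p\mid D$ already gives $p\mid F_{i_0}$ for some $i_0$. Note also that for $p=2$ ramification only gives $D\not\equiv 1\pmod 4$, not $2\mid D$, so your opening step fails there; the paper tacitly restricts to odd $p$ by citing \Cref{lem:nep1}, and \Cref{thm2.1b} explicitly excepts $p=2$ where relevant.

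The genuine gap is the cubic case, and your suspicion about it is well founded: the argument cannot be completed along these lines, because the implication one would need --- that a cubic with a root in $\F_p$ has square discriminant modulo $p$ --- is false (e.g.\ $x^3-2$ has the root $3$ modulo $5$, while $\Delta=-108\equiv 2$ is a nonsquare modulo $5$; in your notation $\Delta(f)\equiv g(\alpha)^2\Delta(g)\pmod p$ and nothing forces $\Delta(g)$ to be a square when $g$ stays irreducible over $\F_p$). You should know that the paper's own proof dispatches degree $3$ with a one-line appeal to ``the formulas for the roots of cubic polynomials,'' which runs into exactly this problem: Cardano's formula involves $\sqrt{-3\Delta}$ buried inside cube roots, and the existence of one root in $\F_p$ does not place $\sqrt{\Delta}$ in $\F_p$. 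This is harmless for the paper because \Cref{lem:referee} is only ever applied to $N=28$, whose factors are all quadratic, the degree-$3$ situations being handled by \Cref{lem:referee2} instead. So your fallback --- checking the finitely many all-cubic factorizations in Table~\ref{tab:jed} directly, using that the two cubic factors of a given $f_N$ there have discriminants agreeing up to rational squares --- is not a cop-out but the only honest way to salvage the degree-$3$ half of the statement; as written, neither your argument nor the paper's establishes it in the generality claimed.
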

\begin{proof}
	Assume that $p$ ramifies in $K$; then by \Cref{lem:nep1} it follows that $p|D$. If $p|n$, then it would follow that $p|m$, which is a contradiction, so we conclude that $p\nmid n$. Dividing out \eqref{jed:fakt} by $n$, we see that $m/n$ is a root of $f_N$ modulo $p$ and hence there exists an $i\in I$ such that $m/n$ is a root of $f_{N,i}$ modulo $p$.
	
	If $f_{N,i}$ is of degree 2 or 3, the formulas for the roots of quadratic and cubic polynomials imply that $\sqrt{\Delta(f_{N,i})}$ is defined over $\F_p$, which proves the statement.
\end{proof}

\begin{remark}
	Note that the statement of \Cref{thm2.1b} can be proved with the previous lemma only for $(N,a)=(28,-7)$. We have $f_{28}(x)=(2x^2-3x+2)(x^2-x+2)(2x^2-x+1)$ and $\Delta(f_{28,i})=-7,$ for each $i$.
\end{remark}

As mentioned in the remark, \Cref{lem:referee} is not enough to prove all of the statements in \Cref{thm2.1b}, so we provide a generalization.

\begin{lemma}\label{lem:referee2}
	Let $f_N=\prod_{i \in I}f_{N,i}$ be the decomposition into irreducible factors, with $f_{N,i}\in \Z[x]$. Assume that there exists a quadratic field $K_0$ such that each $f_{N,i}$ becomes reducible in $K_0[x]$ and let $p$ be an odd prime such that $(p, \Delta(f_{N,i}))=1$ for all $i$. Then if $p$ ramifies in $K$ it follows that $\Delta(K_0)$ is a square modulo $p$, i.e. $p$ is not inert in $K_0$.
\end{lemma}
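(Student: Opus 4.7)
The plan is to use $p\mid D$ to force a root of $f_N$ modulo $p$, lift it through one of the irreducible factors, and then exploit the fact that the corresponding number field contains $K_0$.

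First, $p$ ramifying in $K$ combined with $p$ odd and \Cref{lem:nep1} gives $p\mid D$. Since the leading coefficient of each $f_N$ in Table~\ref{tab:jed} lies in $\{1,4\}$, for odd $p$ it does not divide the leading coefficient of $f_N$. Reducing \eqref{jed:fakt} modulo $p$ then rules out $p\mid n$ (this would force $p\mid m$, contradicting $\gcd(m,n)=1$), so $m/n\in\F_p$ is well-defined and is a root of $f_N$ modulo $p$. In particular, $m/n$ is a root of some $f_{N,i}$ modulo $p$, and the hypothesis $(p,\Delta(f_{N,i}))=1$ makes it a simple root.

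Next, I would show that the number field $F_i=\Q[x]/(f_{N,i})$ contains $K_0$. The key fact is that an irreducible polynomial over $\Q$ which becomes reducible over a quadratic field $K_0$ must factor in $K_0[x]$ as $g_i\bar g_i$, where $g_i$ and $\bar g_i$ are two Galois-conjugate irreducible polynomials of equal degree $\deg f_{N,i}/2$. Indeed, letting $L$ be the splitting field of $f_{N,i}$ over $\Q$, the subgroup $H=\Gal(L/K_0)$ of $G=\Gal(L/\Q)$ has index two and is therefore normal; the orbits of $H$ on the roots are permuted transitively by $G/H\cong\Z/2\Z$, so there are either one or two such orbits, and reducibility over $K_0$ rules out the former. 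Picking a root $\alpha$ of $g_i$, we have $F_i=\Q(\alpha)\subseteq K_0(\alpha)$ with $[K_0(\alpha):\Q]=2\cdot\deg g_i=\deg f_{N,i}=[F_i:\Q]$, which forces $F_i=K_0(\alpha)\supseteq K_0$.

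Finally, the simple root $m/n$ of $f_{N,i}$ modulo $p$, together with $p$ being odd (hence coprime to the leading coefficient of $f_{N,i}$, which divides that of $f_N$), lifts by Hensel's lemma to a root of $f_{N,i}$ in $\Z_p$. This produces a $\Q_p$-embedding of $F_i$, i.e.\ a prime $\mathfrak q$ of $\OO_{F_i}$ above $p$ with residue degree one. Restricting $\mathfrak q$ to $\OO_{K_0}\subseteq\OO_{F_i}$ yields a prime $\mathfrak p$ of $\OO_{K_0}$ above $p$ whose residue degree divides one, and is therefore equal to one. Hence $p$ is not inert in $K_0$, which for odd $p$ is equivalent to $\Delta(K_0)$ being a square modulo $p$ (with $0$ counting as a square, covering the ramified case). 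The step I expect to require the most care is the middle one: showing that the $K_0$-factorisation of $f_{N,i}$ is necessarily into two conjugate factors, and hence that $K_0\subseteq F_i$. Once this inclusion is available, the opening and closing moves are essentially standard algebraic number theory.
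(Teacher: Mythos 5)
Your proof is correct, but it takes a genuinely different route from the paper's. Both arguments open identically (ramification gives $p\mid D$, hence a simple root $m/n$ of some $f_{N,i}$ in $\F_p$ because $(p,\Delta(f_{N,i}))=1$), and both rest on the factorisation $f_{N,i}=g_i\,g_i^{\sigma}$ over $K_0$, which the paper simply asserts as equation \eqref{jed:pol} and you actually prove via the normal-subgroup/orbit argument. (In that step you should note explicitly that reducibility over $K_0$ forces $K_0\subseteq L$ before writing $H=\Gal(L/K_0)\leq\Gal(L/\Q)$; this is immediate, since $K_0\cap L=\Q$ would leave $f_{N,i}$ irreducible over $K_0$.) From there the proofs diverge. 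The paper argues by contradiction: if $p$ were inert in $K_0$, the generator $\sigma$ of $\Gal(K_0/\Q)$ would induce the Frobenius $\tau$ of the residue field extension, so the $\F_p$-rational root would be a common root of $\overline{g_i}$ and $\overline{g_i^{\sigma}}=\overline{g_i}^{\tau}$, hence a double root of $\overline{f_{N,i}}$, contradicting $(p,\Delta(f_{N,i}))=1$. You instead go forward: the degree count gives $K_0\subseteq F_i=\Q[x]/(f_{N,i})$, Hensel lifts the simple root to a degree-one prime of $F_i$ above $p$, and pushing it down to $K_0$ rules out inertness. Your version uses a bit more machinery (Hensel, residue degrees in towers) but yields the extra structural fact $K_0\subseteq F_i$ and would extend verbatim to subfields $K_0$ of higher degree, whereas the paper's contradiction is tailored to the quadratic case and stays entirely at the level of residue fields.
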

\begin{proof}
	Let $\sigma$ be the generator of $\Gal(K_0/\Q)$ and $f_{N,i,K_0}\in K_0[x]$ an irreducible factor of $f_{N,i}$. Then we obviously have \begin{equation}\label{jed:pol}
	f_{N,i, K_0}(f_{N,i,K_0})^\sigma=f_{N,i}.
	\end{equation}
	
	Assume that $p$ ramifies in $K$. We will prove the lemma by contradiction, so we assume that $p$ is inert in $K_0$. As in the proof of \Cref{lem:referee} we conclude that $f_{N,i}$ has a root $a$ in $\F_p$ for some $i$. Hence $a$ is a root of one of the factors on the left in \eqref{jed:pol}. Assume without loss of generality that $a$ is a root of $f_{N,i}$ in $\F_p$.
	
	Let $\fp$ be the prime of $K$ above $p$ and denote by $\F_\fp:=\OO_{K_0}/\fp$ the residue field of $\fp$. Let $\tau=\Gal(\F_\fp/\F_p)$ and denote by $\overline f$ the reduction of a polynomial $f\in K_0[x]$ modulo $\fp$; then we have $\overline{f^\sigma}=\overline{f}^\tau$. Hence $a^\tau$ is a root of $\overline f_{N,i}^\tau$. But since $a\in \F_p$, it follows that $a=a^\tau$ and hence from \eqref{jed:pol} it follows that $a$ is a double root of $f_{N,i}$ over $\F_p$ and hence $\Delta(f_{N,i})$ is divisible by $p$, which is in contradiction with the assumption $(p, \Delta(f_{N,i}))=1.$
\end{proof}

\begin{proof}[Proof of \Cref{thm2.1b}] Let $f_N=\Pi_{i}f_{N,i}$ be the factorization of $f_N$ in $\Z\left[X\right],$ as in \Cref{tab:disc}. Table \ref{tab:tab2}, which can be computed with the accompanying \href{https://web.math.pmf.unizg.hr/~atrbovi/magma/magma2/Table3}{Magma code}, contains for each $N$ the number $a$ such that every $f_{N,i}$ becomes reducible in $\Q(\sqrt{a}),$ the factorization in $\Q(\sqrt{a})$ and discriminants of each $f_{N,i}.$ Using Lemma \ref{lem:referee2} we immediately get that if an odd prime $p$ such that $(p, \Delta(f_{N,i}))=1$ ramifies in $K$, then $a$ is a square modulo $p$. For $p=2$ and $p$ that are not coprime to every $\Delta(f_{N,i})$ and can ramify (this can be checked in \Cref{thm:unr}, which is proved independently) we can explicitly verify that $\left(\frac{a}{p}\right)\neq -1.$
\end{proof}

\begin{theorem}\label{thm2.1c}
	Let $K$ be a quadratic field over which $X_0(N)$ has a non-exceptional non-cuspidal point. For the pairs of $N$ and $a$ indicated in the table, if $p\neq 2$ is a prime such that $a$ is a square modulo $p$, then there exist infinitely many quadratic fields generated by a point on $X_0(N)$ in which $p$ ramifies.

		\begin{table}[h]
		\begin{tabular}{ccccc}\hlinewd{0.8pt}\\[-0.8em]
			\multicolumn{1}{c}{$\mathbfit{N} $} &\multicolumn{1}{c}{$28$}&\multicolumn{1}{c}{$30$}&\multicolumn{1}{c}{$33$}&\multicolumn{1}{c}{$35$} \\ \hdashline\\[-0.8em]
			$\:\:\:\:\:\:\:\:$a$\:\:\:\:\:\:\:\:$                       & $-7$ & $5$ & $-11$& $5$  \\
			\hlinewd{0.8pt}
		\end{tabular}
		\vspace{0.1cm}
		\caption{}
		\label{}
	\end{table}
\end{theorem}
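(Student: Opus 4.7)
\textbf{Proof plan for Theorem \ref{thm2.1c}.}
The plan is, for each pair $(N, a) \in \{(28, -7), (30, 5), (33, -11), (35, 5)\}$ and each odd prime $p$ with $a$ a square modulo $p$, to construct infinitely many coprime pairs $(m, n) \in \Z^2$ such that $v_p(F_N(m, n))$ is odd, where $F_N(m, n) := n^{\deg f_N} f_N(m/n)$ is the homogenization of $f_N$. Each such pair will produce a non-exceptional point $(m/n, \sqrt{f_N(m/n)}) \in X_0(N)(K)$ with $K = \Q(\sqrt{D})$ and $D$ the squarefree part of $F_N(m, n)$, and oddness of $v_p(F_N(m,n))$ forces $v_p(D) = 1$, so that $p$ ramifies in $K$ by \Cref{lem:nep1}.

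Inspecting Table \ref{tab:jed}, for each $N \in \{28, 30, 33, 35\}$ the polynomial $f_N$ has an irreducible quadratic factor $g_N$ whose discriminant equals $a$ up to a rational square: e.g.\ $g_{28} = 2x^2 - 3x + 2$ with $\Delta = -7$, $g_{30} = x^2 + 3x + 1$ with $\Delta = 5$, $g_{33} = x^2 - x + 3$ with $\Delta = -11$, and $g_{35} = x^2 + x - 1$ with $\Delta = 5$. The hypothesis that $a$ is a square modulo $p$ thus guarantees a root $\alpha \in \F_p$ of $g_N$. Let $S_N$ be the finite set of primes dividing $\Delta(g_N)$, the leading coefficient of $g_N$, or any resultant $\res(g_N, f_{N,j})$ with $f_{N,j}$ another irreducible factor of $f_N$. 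For $p \notin S_N$, Hensel's lemma lifts $\alpha$ to a unique simple root $\tilde\alpha \in \Z_p$ and no other factor of $f_N$ vanishes at $\alpha$ modulo $p$. Choosing any coprime $(m, n)$ with $p \nmid n$, $m \equiv \alpha n \pmod p$ and $m \not\equiv \tilde\alpha n \pmod{p^2}$ then yields $v_p(F_N(m, n)) = v_p(g_N(m/n)) = 1$, with all other factors being $p$-adic units.

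For each of the finitely many exceptional primes $p \in S_N$ still satisfying the QR hypothesis (primarily those with $p \mid \Delta(g_N)$, where $\alpha$ becomes a double root of $g_N$ mod $p$), the argument is handled by an explicit case-by-case computation. Writing $g_N(x) = c(x - \alpha)^2 + p \cdot r(x)$ with $r \in \Z[x]$ of degree at most $1$, one checks directly that $r(\alpha) \not\equiv 0 \pmod p$ (otherwise $p^2 \mid \Delta(g_N)$, which does not occur in our list); then the one-parameter family $m = \alpha + p t$, $n = 1$ satisfies $v_p(g_N(m/n)) = 1$ for every $t \in \Z$, and a finite verification confirms that the remaining factors of $f_N$ remain coprime to $p$ along this family. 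For instance, for $(N, p) = (28, 7)$ this is the family $m = 6 + 7t$, $n = 1$.

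In every case one obtains infinitely many coprime pairs $(m, n)$ with $v_p(F_N(m, n))$ odd, hence infinitely many non-exceptional non-cuspidal points on $X_0(N)$ whose field of definition $\Q(\sqrt{D})$ has $p \mid D$. Since $X_0(N)$ has genus $\geq 2$ for each of $N \in \{28, 30, 33, 35\}$, Faltings' theorem bounds $\# X_0(N)(K)$ for each number field $K$, so our infinitely many points cannot all share a common $\Q(\sqrt{D})$ and must be distributed over infinitely many distinct quadratic fields in each of which $p$ ramifies. The main obstacle is the Step 3 analysis: at primes $p \mid \Delta(g_N)$, one must exhibit a one-parameter family along which $v_p(F_N)$ is exactly $1$ (not higher, not even), which hinges on the non-vanishing of the remainder polynomial $r(\alpha)$ modulo $p$ and must be verified on a case-by-case basis. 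Secondary concerns — that the resulting points are non-exceptional and that the constructed $(m, 1)$ pairs are coprime — are immediate from Definition \ref{exceptional} and from $n = 1$.
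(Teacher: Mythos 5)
Your proposal is correct and follows essentially the same route as the paper: use the splitting of a quadratic factor of $f_N$ over $\Q(\sqrt{a})$ to produce a root of $f_N$ modulo $p$, arrange for $v_p(f_N(u))$ to equal $1$ along an infinite family of $u$, and invoke Faltings to rule out the resulting fields being finite in number. The only (harmless) divergences are that you make the valuation-exactly-one step more precise than the paper does — via Hensel's lemma for $p\nmid \Delta(g_N)$ and an explicit expansion $g_N(x)=c(x-\alpha)^2+p\,r(x)$ at the finitely many primes dividing $\Delta(g_N)$, where the paper instead shifts $x_0$ by multiples of $p$ and defers small primes to a computation — and that you apply Faltings to $X_0(N)(K)$ for varying $K$ rather than to the quadratic twists $C_N^{d_u}(\Q)$ as the paper does; both versions of the finiteness argument are valid.
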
	
	
\begin{proof}
	For all pairs of $N$ and $a$, in Table \ref{tab:tab2} we have the factorizations of $f_N$ where some of the factors are linear over $\Q(\sqrt{a}).$ Therefore, $f_N$ has a root over each $\F_p$ such that $\sqrt{a}$ is defined modulo $p,$ i.e. such that $a$ is a square modulo $p$.
	
	\noindent If $x_0\in \Z$ is a root of $f_N$ such that $f_N(x_0)\equiv 0\pmod p,$ then $f_N(x_0+kp)\equiv 0\pmod p ,\:\: k=0,...,p-1.$ If $p>\deg f_N,$ we have $f_N(x_0+kp)\not\equiv 0\pmod {p^2}$ for at least one value of $k$.
	Now we know that for $p>\deg f_N$ there exists $a \in \Z$ be such that $f_N(a)\equiv 0 \pmod p$ and $f_N(a)\not\equiv 0 \pmod {p^2}.$ For smaller values of $p$, with exception of $p=2,$ one can explicitly check that this claim remains true. Therefore, $p$ ramifies in $\mathbb{Q}(\sqrt{f_N(a)}).$

	\noindent It remains to show that there are infinitely many quadratic fields such that $p$ ramifies. Let $S=\{u\in \Z: u\equiv a \pmod {p^2}\}$. Obviously $f_N(u)\equiv 0\pmod p$ and $f_N(u)\not\equiv 0 \pmod {p^2}$ for all $u \in S$. Let $d_u$ be the squarefree part of $f_N(u)$; the quadratic point $(u,\sqrt{f_N(u)})$ will be defined over $\Q(\sqrt{d_u})$. After writing $f_N(u)=d_us_u^2$ for some $s_u\in \Z$, we observe that $(u,s_u)$ is a rational point on the quadratic twists $C_N^{d_u}$ of $X_0(N)$,
	$$C_N^{d_u}:d_uy^2=f_N(x).$$
	Since each $C_N^{d_u}$ is of genus $\geq 2$, by Faltings' theorem it follows that $C_N^{d_u}(\Q)$ is finite and hence $\{d_u:u \in S\}$ is infinite, proving the claim.
\end{proof}

	\begin{table}[h]
		\begin{tabular}{ccc}\hlinewd{0.8pt}\\[-0.8em]
			\multicolumn{1}{c}{$\mathbfit{N} $}&\multicolumn{1}{c}{\textup{\textbf{factorization $\mathbfit{f_N=\prod_{i \in I}f_{N,i}}$ in $\Z[x]$}}}&\multicolumn{1}{c}{$\Delta(\mathbfit{f_{N,i}}) $} \\ \hlinewd{0.8pt}\\[-0.8em]
			$\:\:\:\mathbf{26}\:\:\:$  & $x^6 - 8x^5 + 8x^4 - 18x^3 + 8x^2 - 8x + 1$& $2^{20}\cdot 13^3$ \\ \hdashline\\[-0.8em]
			$\mathbf{28}$                     &  $(2x^2 - 3x + 2)\times$ & $-7$   \\
			&  $\times (x^2 - x + 2)\times$ & $-7$   \\
			&  $\times (2x^2 - x + 1)$ & $-7$   \\ \hdashline\\[-0.8em]
			$\mathbf{29}$                    & $x^6 - 4x^5 - 12x^4 + 2x^3 + 8x^2 + 8x - 7$&$2^{12}\cdot 29^5$      \\ \hdashline\\[-0.8em]
			$\mathbf{30}$                     &  $(x^2 + 3x + 1)\times$ & $5$   \\
			&  $\times (x^2 + 6x + 4)\times$ & $2^2\cdot 5$   \\
			&  $\times (x^4 + 5x^3 + 11x^2 + 10x + 4)$ & $2^2\cdot 3^2 \cdot 5^2$   \\ \hdashline\\[-0.8em]
			$\mathbf{33}$
			&  $(x^2 - x + 3)\times$ & $-11$   \\
			&  $\times (x^6 + x^5 + 8x^4 - 3x^3 + 20x^2 - 11x + 11)$ & $-2^8\cdot 3^6 \cdot 11^5$   \\ \hdashline\\[-0.8em]
			$\mathbf{35}$
			&  $(x^2 + x - 1)\times$ & $5$   \\
			&  $\times (x^6 - 5x^5 - 9x^3 - 5x - 1)$ & $2^8\cdot 5^7 \cdot 7^2$   \\ \hdashline\\[-0.8em]
			$\mathbf{39}$
			&  $(x^4 - 7x^3 + 11x^2 - 7x + 1)\times$ & $-3^3 \cdot 13^2$   \\
			&  $\times (x^4 + x^3 - x^2 + x + 1)$ & $-3 \cdot 13^2$   \\ \hdashline\\[-0.8em]
			$\mathbf{40}$                     &  $x^8 + 8x^6 - 2x^4 + 8x^2 + 1$ & $2^{40}\cdot 5^4$   \\ \hdashline\\[-0.8em]
			$\mathbf{41}$                     & $x^8 - 4x^7 - 8x^6 + 10x^5 + 20x^4 + 8x^3 - 15x^2 - 20x - 8$ & $-2^{16}\cdot 41^6$ \\ \hdashline\\[-0.8em]
			$\mathbf{48}$
			&  $(x^4 - 2x^3 + 2x^2 + 2x + 1)\times$ & $2^8 \cdot 3^2$   \\
			&  $\times (x^4 + 2x^3 + 2x^2 - 2x + 1)$ & $2^8 \cdot 3^2$   \\ \hdashline\\[-0.8em]
			$\mathbf{50}$                  &  $x^6 - 4x^5 - 10x^3 - 4x + 1$ & $2^{16}\cdot 5^5$    \\ \hlinewd{0.8pt}
		\end{tabular}
		\vspace{0.1cm}
		\caption{Factorizations $f_N=\prod_{i \in I}f_{N,i}$
		in $\Z[x]$ and the discriminants of $\mathit{f_{N,i}}$ from the statement of \Cref{lem:referee2}.}
		\label{tab:disc}
	\end{table}

	\newpage
	\begin{table}[h]
		\begin{tabular}{crc}\hlinewd{0.8pt}\\[-0.8em]
			\multicolumn{1}{c}{$\mathbfit{N} $} & \multicolumn{1}{r}{\textup{\textbf{$a$}}}&\multicolumn{1}{c}{\textup{\textbf{factorization of $\mathbfit{f_N}$ in $\Q(\sqrt{a})$}}} \\ \hlinewd{0.8pt}\\[-0.8em]
			$\:\:\:\mathbf{26}\:\:\:$  &$13$& \thead{$\left((x^3+(-\sqrt{13} - 4)x^2 + \frac{1}{2}(\sqrt{13} + 5)x + \frac{1}{2}(-3\sqrt{13} - 11)\right) \times $ \\ $\times \left(x^3+(\sqrt{13} - 4)x^2 + \frac{1}{2}(-\sqrt{13} + 5)x + \frac{1}{2}(3\sqrt{13} - 11)\right)$ } \\ \hdashline\\[-0.8em]
			$\mathbf{28}$                       &$-7$&  \thead{ $\left(x + \frac{1}{2}(-\sqrt{-7} - 1)\right)\left(x + \frac{1}{4}(-\sqrt{-7} - 3\right)\times$ \\ $\times\left(x + \frac{1}{4}(-\sqrt{-7} - 1)\right)\left(x + \frac{1}{4}(\sqrt{-7} - 3)\right)\times$ \\ $\times\left(x + \frac{1}{4}(-\sqrt{-7} - 1)\right)\left(x + \frac{1}{2}(\sqrt{-7} - 1)\right)$ }   \\ \hdashline\\[-0.8em]
			$\mathbf{29}$                     &$29$& \thead{$\left(x^3 + (-\sqrt{29}-2)x^2 + \frac{1}{2}(\sqrt{29} + 13)x + \frac{1}{2}(-\sqrt{29} - 1)\right) \times $ \\ $\times \left(x^3 + (\sqrt{29}-2)x^2 + \frac{1}{2}(-\sqrt{29} + 13)x + \frac{1}{2}(\sqrt{29} - 1)\right)$ }     \\ \hdashline\\[-0.8em]
			$\mathbf{30}$                     &$5$& \thead{$\left(x - \sqrt{5} + 3\right)\left(  x + \frac{1}{2}(-\sqrt{5} + 3)\right)\times $\\$\times \left(  x + \frac{1}{2}(\sqrt{5} + 3)\right)\left( x + \sqrt{5} + 3 \right)\times $ \\ $\times \left(  x^2 + \frac{1}{2}(-\sqrt{5} + 5)x - \sqrt{5} + 3\right)\left(  x^2 + \frac{1}{2}(\sqrt{5} + 5)x + \sqrt{5} + 3\right)$ }     \\ \hdashline\\[-0.8em]
			$\mathbf{33}$                     &$-11$& \thead{$\left(x + \frac{1}{2}(-\sqrt{-11} - 1)\right)\left(x + \frac{1}{2}(\sqrt{-11} - 1)\right)\times $\\$\times \left(x^3 + \frac{1}{2}(-\sqrt{-11} + 1)x^2 + \frac{1}{2}(\sqrt{-11} + 5)x - \sqrt{-11} \right)\times $ \\ $\times \left(x^3 + \frac{1}{2}(\sqrt{-11} + 1)x^2 + \frac{1}{2}(-\sqrt{-11} + 5)x + \sqrt{-11} \right)$ } \\ \hdashline\\[-0.8em]
			$\mathbf{35}$                     &$5$& \thead{$\left(x + \frac{1}{2}(-\sqrt{5} + 1)\right)\left(x + \frac{1}{2}(\sqrt{5} + 1)\right)\times $\\$\times \left(x^3 + \frac{1}{2}(-3\sqrt{5} - 5)x^2 + \frac{1}{2}(\sqrt{5} + 5)x - \sqrt{5} - 2 \right)\times $ \\ $\times \left(x^3 + \frac{1}{2}(3\sqrt{5} - 5)x^2 + \frac{1}{2}(-\sqrt{5} + 5)x + \sqrt{5} - 2 \right)$ } \\ \hdashline\\[-0.8em]
			$\mathbf{39}$                     &$13$& \thead{$\left( x^2 + \frac{1}{2}(-\sqrt{13} - 7)x + 1 \right) \left(x^2 + \frac{1}{2}(-\sqrt{13} +1)x + 1\right) \times $ \\ $\times \left( x^2 + \frac{1}{2}(\sqrt{13} - 7)x + 1 \right) \left(x^2 + \frac{1}{2}(\sqrt{13} +1)x + 1\right)$ }    \\ \hdashline\\[-0.8em]
			$\mathbf{40}$                     &\thead{$\:\:\:\:\:-1$ \\ $\:$\\ $\:$ \\ $\:\:\:\:\:\:\:5$}&  \thead{$\left(x^4 - 2\sqrt{-1}x^3 + 2x^2 + 2\sqrt{-1}x + 1 \right) \times $ \\ $\times \left(x^4 + 2\sqrt{-1}x^3 + 2x^2 - 2\sqrt{-1}x + 1 \right)$\\ $\:$ \\ $\left(x^4 + (-2\sqrt{5} + 4)x^2 + 1 \right)  \left(x^4 + (2\sqrt{5} + 4)x^2 + 1 \right)$ }  \\ \hdashline\\[-0.8em]
			$\mathbf{41}$                     &$41$&  \thead{$\left(x^4 - 2x^3 + (-\sqrt{41} - 6)x^2 + (-\sqrt{41} - 7)x + \frac{1}{2}(-\sqrt{41} - 3)\right) \times $ \\ $\times \left(x^4 - 2x^3 + (\sqrt{41} - 6)x^2 + (\sqrt{41} - 7)x + \frac{1}{2}(\sqrt{41} - 3)\right)$ } \\ \hdashline\\[-0.8em]
			$\mathbf{48}$                     &\thead{$\:\:\:\:\:-1$ \\ $\:$\\ $\:$ \\ $\:\:\:\:\:\:\:3$}&	\thead{$\left( x^2 + (-\sqrt{-1} - 1)x - \sqrt{-1} \right) \left(x^2 + (-\sqrt{-1} + 1)x + \sqrt{-1}\right) \times $ \\ $\times \left( x^2 + (\sqrt{-1} - 1)x + \sqrt{-1} \right) \left(x^2 + (\sqrt{-1} + 1)x - \sqrt{-1}\right)$\\ $\:$ \\ $\left( x^2 + (-\sqrt{3} - 1)x + \sqrt{3} + 2 \right) \left(x^2 + (-\sqrt{3} + 1)x - \sqrt{3} + 2\right) \times $ \\ $\times \left( x^2 + (+\sqrt{3} - 1)x - \sqrt{3} + 2 \right) \left(x^2 + (\sqrt{3} + 1)x + \sqrt{3} + 2\right)$ }   \\ \hdashline\\[-0.8em]
			$\mathbf{50}$                     &$5$&  \thead{$\left(x^3 + (-\sqrt{5} - 2)x^2 + \frac{1}{2}(-\sqrt{5} + 1)x + \frac{1}{2}(-\sqrt{5} - 3)\right) \times $ \\ $\times \left(x^3 + (\sqrt{5} - 2)x^2 + \frac{1}{2}(\sqrt{5} + 1)x + \frac{1}{2}(\sqrt{5} - 3)\right)$ }  \\ \hlinewd{0.8pt}
		\end{tabular}
		\vspace{0.1cm}
		\caption{Factorizations of $\mathit{f_N}$ in $\Q(\sqrt{a})$ used in the proof of  \Cref{lem:referee2} and \Cref{thm2.1c}.}
		\label{tab:tab2}
	\end{table}

\newpage

\begin{theorem}\label{thm:unr}
In \Cref{table4} below, we list the primes $p\leq 100$ which are unramified for all quadratic fields generated by quadratic points $X_0(N)$, for $N\in\{22,23,26,29,30,31,33,35,39,40,41,46,47,48,50,59,71\}.$
\begin{table}[h]

	\begin{tabular}{cl}\hlinewd{0.8pt}	\\[-0.8em]

		\multicolumn{1}{c}{$\mathbfit{N} $} & \multicolumn{1}{l}{\textup{\textbf{unramified primes}}} \\ \hlinewd{0.8pt}
		\\[-0.8em]
		$\:\:\:\:\mathbf{22}\:\:\:\:$                        & $3, 5, 23, 31, 37, 59, 67, 71, 89, 97$          \\ \hdashline
		\\[-0.8em]
		$\mathbf{23}$                        & $2, 3, 13, 29, 31, 41, 47, 71, 73$								                                              \\ \hdashline
		\\[-0.8em]
		$\mathbf{26}$                        & $3, 5, 7, 11, 17, 19, 31, 37, 41, 43, 47, 59, 67, 71, 73, 83, 89, 97								$                                              \\ \hdashline
		\\[-0.8em]
		$\mathbf{28}$                        & $3, 5, 13, 17, 19, 31, 41, 47, 59, 61, 73, 83, 89, 97								$                                              \\ \hdashline
		\\[-0.8em]
		$\mathbf{29}$                        & $3, 5, 11, 13, 17, 19, 31, 37, 41, 43, 47, 53, 61, 73, 79, 89, 97								$                                              \\ \hdashline
		\\[-0.8em]
		$\mathbf{30}$                        & $2, 3, 7, 13, 17, 23, 37, 43, 47, 53, 67, 73, 83, 97								$                                              \\ \hdashline
		\\[-0.8em]
		$\mathbf{31}$                        & $2, 5, 7, 19, 41, 59, 71, 97								$                                              \\ \hdashline
		\\[-0.8em]
		$\mathbf{33}$                        & $2, 7, 13, 17, 19, 29, 41, 43, 61, 73, 79, 83								$                                              \\ \hdashline
		\\[-0.8em]
		$\mathbf{35}$                        & $2, 3, 7, 13, 17, 23, 37, 43, 47, 53, 67, 73, 83, 97								$                                              \\ \hdashline
		\\[-0.8em]
		$\mathbf{39}$                        & $2, 5, 7, 11, 13, 19, 31, 37, 41, 47, 59, 61, 67, 71, 73, 79, 83, 89, 97								$                                              \\ \hdashline
		\\[-0.8em]
		$\mathbf{40}$                        & $2, 3, 5, 7, 11, 13, 17, 19, 23, 31, 37, 41, 43, 47, 53, 59, 61, 67, 71, 73,
		79, 83, 97$						                                              \\ \hdashline
		\\[-0.8em]
		$\mathbf{41}$                        & $3, 5, 7, 11, 13, 17, 19, 29, 37, 47, 53, 61, 67, 71, 73, 79, 89, 97								$                                              \\ \hdashline
		\\[-0.8em]
		$\mathbf{46}$                        & $2, 3, 13, 29, 31, 41, 47, 71, 73								$                                              \\ \hdashline
		\\[-0.8em]
		$\mathbf{47}$                        & $2, 3, 7, 17, 37, 53, 59, 61, 71, 79, 89, 97								$                                              \\ \hdashline
		\\[-0.8em]
		$\mathbf{48}$                        & $2, 3, 5, 7, 11, 17, 19, 23, 29, 31, 41, 43, 47, 53, 59, 67, 71, 79, 83, 89							$                                              \\ \hdashline
		\\[-0.8em]
		$\mathbf{50}$                        & $3, 7, 11, 13, 17, 19, 23, 37, 41, 43, 47, 53, 67, 73, 83, 89, 97								$                                              \\ \hdashline
		\\[-0.8em]
		$\mathbf{59}$                        & $3, 5, 7, 19, 29, 41, 53, 79								$                                              \\ \hdashline
		\\[-0.8em]
		$\mathbf{71}$                        & $2, 3, 5, 19, 29, 37, 43, 73, 79, 83, 89$\\ \hlinewd{0.8pt}
	\end{tabular}
\vspace{0.1cm}
\caption{Primes up to 100 that do not ramify in quadratic fields over which $X_0(N)$ has a point.}
\label{table4}
\end{table}
\end{theorem}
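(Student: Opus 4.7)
The plan is to prove \Cref{thm:unr} by systematizing the residue-class computation used throughout the proof of \Cref{thm2.1a}. For a non-exceptional point $(x_0,\sqrt{f_N(x_0)})$ with $x_0=m/n$ in lowest terms, recall from \eqref{jed:fakt} that $Ds^2 = n^{\deg f_N} f_N(m/n)$, viewed as the homogenization $F_N(m,n)\in\Z[m,n]$ evaluated at coprime integers. By \Cref{lem:nep1}, an odd prime $p$ is unramified in $K=\Q(\sqrt D)$ if and only if $v_p(D)=0$, and by \Cref{lem:nep2}(b), this fails exactly when $v_p(F_N(m,n))$ is odd. Similarly, for $p=2$, \Cref{lem:par1} and \Cref{lem:par2} reduce unramifiedness to a congruence condition on $F_N(m,n)$ modulo a fixed power of $2$.

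The main steps are then as follows. First, for each $N$ from \eqref{lista} and each prime $p$ appearing in the row for that $N$ in \Cref{table4}, I would enumerate all pairs of residues $(m,n)\in(\Z/p^L\Z)^2$ with $p\nmid\gcd(m,n)$, for a precision $L$ large enough that $v_p(F_N(m,n))$ is unambiguously determined (either because $v_p$ turns out to be $<L$ on the given residue class, or, in the homogeneous situation where $p\mid n$ forces $p\nmid m$, because the value of $F_N(m,n)$ modulo $p^L$ detects $v_p$). In each class, I would verify that $v_p(F_N(m,n))$ is even when $p$ is odd, and that the analogue required by \Cref{lem:par2} to force $D\equiv 1\pmod 4$ holds when $p=2$. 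This is a finite verification to be carried out in Magma, parallel in spirit to the computations appearing in the proof of \Cref{thm2.1a} and reusing the same code framework.

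Second, the finitely many exceptional non-cuspidal quadratic points on each $X_0(N)$ listed in \cite[Tables 1--18]{BN} must be treated separately, since the argument above applies only to non-exceptional points. For each such exceptional point the generating quadratic field $\Q(\sqrt{D})$ is explicitly given, and one simply reads off which primes $p\leq 100$ ramify in it; any such $p$ is removed from the candidate list for that $N$. The entries in \Cref{table4} are precisely the primes surviving both the non-exceptional residue check and this exceptional check.

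The main obstacle is choosing the precision $L$ correctly: one must justify that testing modulo $p^L$ actually suffices to determine the parity of $v_p(F_N(m,n))$ for every coprime $(m,n)$. In practice, inspection of the polynomials $f_N$ in \Cref{tab:jed} shows that for each $(N,p)$ in the table a modest $L$ suffices, since $F_N$ has bounded content at $p$ and its reductions modulo small powers of $p$ cleanly separate the cases $p\mid n$, $p\mid m$, and $p\nmid mn$. Once this precision is fixed, the rest of the proof is a routine enumerative check, and the certificates are provided by the accompanying Magma code.
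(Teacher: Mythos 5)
Your proposal is correct and would establish the theorem, but it is considerably more elaborate than the paper's own argument, which rests on a single simplifying observation: for every pair $(N,p)$ appearing in \Cref{table4}, the polynomial $f_N$ has \emph{no linear factor modulo $p$} (equivalently, the homogenization $F_N$ has no zero in $\P^1(\F_p)$). Hence $Ds^2=F_N(m,n)\not\equiv 0 \pmod p$ for all coprime $(m,n)$, so $p\nmid D$ outright and $p$ is unramified by \Cref{lem:nep1} --- no analysis of the parity of $v_p(F_N(m,n))$, and no choice of a precision $L$, is needed; the check happens entirely modulo $p$ itself. Your worry about justifying the precision $L$ therefore dissolves for the primes actually listed: the valuation is not merely even, it is zero. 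That said, your version is more robust in two respects where the paper's one-line proof is actually thin. First, for $p=2$ the paper's conclusion ``$D\not\equiv 0\pmod 2$ hence $2$ is unramified'' is not sufficient (one needs $D\equiv 1\pmod 4$ by \Cref{lem:par1}), and you correctly route this case through \Cref{lem:par2} and a congruence modulo a power of $2$, which is exactly the computation already carried out for those $N$ in the proof of \Cref{thm2.1a}. Second, you explicitly treat the finitely many exceptional points from \cite[Tables 1--18]{BN}, which the paper's proof silently ignores (the statement is evidently meant to be read, like \Cref{thm2.1a}, as a statement about non-exceptional points). One caveat on your method's scope: if for some $(N,p)$ the form $F_N$ had a root in $\P^1(\Q_p)$, the valuations $v_p(F_N(m,n))$ would be unbounded and no finite $L$ would certify evenness --- but in that situation odd valuations occur and $p$ genuinely ramifies for some point, so the procedure fails only where the conclusion is false, and your enumeration terminates precisely on the primes that belong in the table.
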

\begin{proof}
The proofs of all the facts listed are easy and all basically the same; take some prime $p$ in the table above. Using the notation as in \ref{jed:fakt}, we compute that $f_N$ has no linear factor modulo $p$. It follows that  $n^{2k}d\not\equiv 0 \pmod p$ for any positive integer $k$, which gives us that $D\not\equiv 0 \pmod p$ and hence $p$ is unramified.
\end{proof}

\section{Splitting of $2$ in cubic fields generated by cubic points of $X_1(2,14)$}
\label{sec:cubic}

Let us fix the following notation for the remainder of this section. Denote $X:=X_1(2,14)$ and $Y:=Y_1(2,14)$. Let $\phi:X_1(2,14)\rightarrow X_1(14)$ be the forgetful map sending $(E,P,Q,R)\in X$ with $P$ and $Q$ of order 2 and $R$ of order 7 to $(E,P,R)\in X_1(14)$. Let $K$ be a cubic number field over which $X$ has a non-cuspidal point $x=(E,P,Q,R)$ and let $\PP$ be a prime above $p$. By \cite[Theorem 1.2]{BN2}, $K$ is a cyclic cubic field and $E$ is a base change of an elliptic curve defined over $\Q$. Denote by $\overline x$ the reduction of $x$ mod $\PP$.


The main result of this section is the following proposition.

\begin{proposition}\label{thm:main_sec3}
	Let $K$ be a cyclic cubic field and $E/\Q$ an elliptic curve such that $E(K)_{tors}\simeq \Z/2\Z \times \Z/14 \Z$. Let $q=2$ or $q \equiv \pm 1 \pmod 7$ be a rational prime such that $E$ has multiplicative reduction in $q$. Then $q$ splits in $K$.
\end{proposition}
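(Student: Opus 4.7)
The plan. Since $E/\Q$ and $K$ is Galois over $\Q$, the subgroup $C:=\langle R\rangle\subset E[7]$ is stable under $\Gal(\overline{\Q}/\Q)$ (by \cite{BN2}), so $G_\Q$ acts on $C$ via a character $\chi\colon G_\Q\to(\Z/7)^{\times}$ whose fixed field is $K$. Because $[K:\Q]=3$ is cyclic, $\im\chi$ is forced to be the unique order-$3$ subgroup $\{1,2,4\}\subset(\Z/7)^{\times}$, and a prime $q$ unramified in $K$ splits if and only if $\chi(\Frob_q)=1$. The strategy is to compute $\chi|_{G_{\Q_q}}$ locally from the Tate uniformization.

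Pick $q\neq 7$ of multiplicative reduction. Over $\overline{\Q_q}$, $E\cong\overline{\Q_q}^{\,\times}/\pi^{\Z}$ with Tate parameter $\pi\in\Q_q^\times$, and $E[7]=\langle\zeta_7,\pi^{1/7}\rangle$. This gives a $G_{\Q_q}$-stable filtration with $\mu_7\otimes\epsilon$ on the sub and an étale line on the quotient, carrying characters $\chi_7\epsilon$ and $\epsilon$ respectively, where $\epsilon=1$ in the split case and $\epsilon=\eta$ (the unramified quadratic character of $G_{\Q_q}$) in the non-split case. Since $C$ is a $G_{\Q_q}$-stable $\F_7$-line it must match either the sub or the quotient, so $\chi|_{G_{\Q_q}}\in\{\chi_7\epsilon,\epsilon\}|_{G_{\Q_q}}$.

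For $q\equiv 1\pmod 7$, $\chi_7|_{G_{\Q_q}}=1$ and $\epsilon|_{G_{\Q_q}}$ is either trivial or of order $2$; since $\im\chi=\{1,2,4\}$ contains no element of order $2$, only the trivial character is compatible, forcing $\chi(\Frob_q)=1$ and thus $q$ splits. For $q\equiv -1\pmod 7$, $\chi_7(\Frob_q)=-1$; the four a priori candidates for $\chi(\Frob_q)$ (namely $\pm 1$ and $\pm\eta(\Frob_q)$) all lie outside $\{1,2,4\}$ except for the value $1$, so again $\chi(\Frob_q)=1$ and $q$ splits. (The non-split case at $q\equiv -1\pmod 7$ is consistent because $\chi_7\eta$ becomes trivial there.)

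The main obstacle is $q=2$: since $2\equiv 2\pmod 7$ the cyclotomic character $\chi_7|_{G_{\Q_2}}$ is already of order $3$, so both a priori possibilities $\chi(\Frob_2)=1$ (which is what we want) and $\chi(\Frob_2)=2$ (which would make $2$ inert) land in $\{1,2,4\}$ and cannot be separated by character theory alone. The plan for this case is to bring in the extra $2$-torsion point $Q\in E(K)$: the subgroup $E[2]$ is $K$-rational, and an odd-degree descent (analogous to the quadratic arguments of Section~2) forces the Tate parameter $\pi$ to be a square in $\Q_2^{\times}$, so $v_2(\pi)$ is even and in particular $E[2]\subset E(\Q_2)$. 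I would then combine this with the explicit Tate-parameter structure at $2$ coming from the classification of the $\Q$-isogeny classes of such $E$ in \cite{BN2} to show that the $\Q$-rational $7$-subgroup $C$ must coincide with the étale quotient rather than with $\mu_7$ over $\Q_2$; equivalently, that $\pi^{1/7}\in\Q_2$, so that $R$ has a $\Q_2$-rational representative $\pi^{j/7}$ in the Tate parameterization. Once this is established, $\chi|_{G_{\Q_2}}=\epsilon|_{G_{\Q_2}}=1$, giving $\chi(\Frob_2)=1$ and hence $2$ splits. The delicate step is this last matching between the local filtration at $2$ and the global constraint from the classification; this is the reason the authors warn that the argument is more intricate than in the quadratic case.
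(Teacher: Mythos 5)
Your treatment of the primes $q\equiv\pm1\pmod 7$ is correct and genuinely different from the paper's. You work with the mod-$7$ character $\chi$ cutting out $K$ and pin down $\chi(\Frob_q)$ by intersecting the two local possibilities coming from the Tate filtration (namely $\chi_7\epsilon$ and $\epsilon$) with the order-$3$ subgroup $\{1,2,4\}\subset(\Z/7\Z)^\times$; this is a clean local computation, and it also gives unramifiedness for free. The paper instead argues geometrically on the modular curve: it reduces $x\in X_1(2,14)(K)$ modulo the (assumed unique) prime $\PP$ above $q$, shows the reduction is a cusp defined over $\F_q$ (for $q=2$ by factoring the defining polynomial of $X_1(2,14)$ over $\F_2$, for $q\equiv\pm1\pmod 7$ because the cusps are defined over $\Q$ or $\Q(\zeta_7)^+$), and then derives a contradiction by pushing the three distinct conjugate cusps $C_0,\alpha(C_0),\alpha^2(C_0)$ down the degree-$2$ map $\phi$ to the elliptic curve $X_1(14)$, where reduction is injective on torsion (Manin--Drinfeld plus Katz). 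Your route avoids that machinery for $q\equiv\pm1\pmod 7$, at the cost of invoking the Galois-stability of $\langle R\rangle$ and the fact that $\chi$ cuts out $K$; both do follow from the description in \cite{BN2} of the Galois action via the automorphism $\alpha=\omega\tau$.

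For $q=2$, however --- which is the main point of the proposition --- your argument has a genuine gap. As you note, $\chi_7(\Frob_2)=2$ already lies in $\{1,2,4\}$, so the character sieve cannot exclude the bad configuration: split multiplicative reduction at $2$, with $C$ equal to $\mu_7$ locally, giving $\chi(\Frob_2)=2$ and $2$ inert. Your proposed fix does not close this. The odd-degree descent does show that the Tate parameter $\pi$ is a square in $\Q_2^\times$, but that is perfectly consistent with the bad configuration; and even establishing $\pi^{1/7}\in\Q_2^\times$ would only show that the \'etale line is realized by a $\Q_2$-rational point of order $7$ --- it would not show that $C$ \emph{is} that line rather than $\mu_7$. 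Indeed, if $2$ were inert then $K_\PP$ would be the unramified cubic extension of $\Q_2$, which contains $\zeta_7$, so the $\mu_7$-line of $E[7]$ is also rational over $K_\PP$ and no purely local statement distinguishes the two candidates for $C$. The step you defer to ``the explicit Tate-parameter structure coming from the classification in \cite{BN2}'' is precisely the missing content, and it is global in nature: the paper supplies it via the factorization $f(u,v)=(u+v)(uv+v+1)^2$ over $\F_2$, which forces $\overline{x}\in X(\F_2)$, combined with the cusp-separation argument on $X_1(14)$. As written, your case $q=2$ is a plan, not a proof.
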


Note that, as has been mentioned, the assumptions that fact that $K$ is cyclic and that $E$ is a base change od an elliptic curve defined over $\Q$ are not necessary as they both follow from \cite[Theorem 1.2]{BN2}, but have been included for the proof to be easier to follow.

We prove \Cref{thm:main_sec3} by first showing that the reduction $\overline x$ of $x$ modulo a prime $\PP$ above $q$ is defined over $\F_q$. On the other hand, by considering the morphisms from $X$ to $X_1(14)$ and $\P^1$, we show that if $q$ was ramified or inert in $K$, then the Frobenius of $\F_\PP$ would have to act nontrivially on $\overline{x}$, which leads to a contradiction.

The curve $X$ has the following affine model \cite[Proposition 3.7]{BN2}:
\begin{equation}\label{X214}
  X: f(u,v)=(u^3 + u^2 - 2u - 1)v(v + 1)
+ (v^3 + v^2 - 2v - 1)u(u + 1) = 0.
\end{equation}
The curve $X$ has 18 cusps, 9 which are defined over $\Q$ and $9$ which are contained in $3$ Galois orbits, each consisting of 3 points defined over $\Q(\zeta_7)^+$. 

\begin{remark}
As has been mentioned, $E$ is a base change of an elliptic curve over $\Q$, so in \Cref{thm:main_sec3} and the remainder of the section, when we consider the reduction of $E$ (or more precisely its global minimal model) modulo a rational prime $p$, we will consider $E$ to be defined over $\Q$. The same comment applies to the elliptic curve $X_1(14)$. We will also consider the reduction of (the global minimal model over $\Q$ of) $X_1(14)$ modulo a prime $\PP$ of $K$. This is also well defined since $X_1(14)$ is semistable; in  particular $X_1(14)$ has multiplicative reduction at all $\PP$ dividing $2$ and $7$ and good reduction at all other primes.

When considering the reduction of $X$ modulo a prime $\PP$, we will always mean the reduction of the model given in \eqref{X214}, viewed as a subscheme of $\P^1 \times  \P^1$ over $\Z$.

\end{remark}

Let $\tau$ and $\omega$ be automorphisms of $X$, where the moduli interpretation of $\tau$ is that it acts as a permutation of order $3$ on the points of order $2$ of $E$ and trivially on the point of order 7, and where the moduli interpretation of $\omega$ is that it acts trivially on the points of order 2 and as multiplication by 2 on the point of order $7$. Let $\alpha:=\omega\tau$ and $\beta:=\omega \tau^2$.

From \cite[Section 3]{BN2} it follows that the only maps of degree $3$ from $X$ to $\P^1$ are quotienting out by subgroups generated by $\alpha$ and $\beta$ (an automorphism of $X$ interchanges these two maps) and that all non-cuspidal cubic points on $X$ are inverse images of $\P^1(\Q)$ with respect to these maps. Also, both $\alpha$ and $\beta$ act without fixed points on the cusps.



Thus, one can write every elliptic curve with $\Z/2\Z \times \Z/14\Z$ torsion over a cubic field as $E_u$ for some $u\in \Q$. We do not display the model for $E_u$ as it contains huge coefficients, but it can be found in the accompanying \href{https://web.math.pmf.unizg.hr/~atrbovi/magma/magma2/X1(2,14)}{Magma code}. In \cite{BN2} it is proved that the curve $E:=E_u$ is a base change of an elliptic curve defined over $\Q$.

We have
$$j(u)=\frac{(u^2 + u + 1)^3(u^6 + u^5 + 2u^4 + 9u^3 + 12u^2 + 5u + 1)f_{12}(u)^3}{u^{14}(u+1)^{14}(u^3 + u^2 - 2u - 1)^2},$$
where
$$f_{12}(u)=u^{12} + 4u^{11} + 3u^{10} - 4u^9 + 6u^7 - 17u^6 - 30u^5 + 6u^4 + 34u^3 +
25u^2 + 8u + 1,$$ and
\begin{align*}
\Delta(u)&=\frac{u^{14}(u+1)^{14}(u^3 + u^2 - 2u - 1)^2}{h_{12}(u)^{12}},\\
c_4(u)&=\frac{g_2(u)g_6(u)g_{12}(u)}{h_{12}(u)^{4}},
\end{align*}
where
\begin{align*}
  h_{12}(u)&=u^{12} + 6u^{11} + 13u^{10} + 14u^9 + 20u^8 + 44u^7 + 53u^6 + 36u^5 +
        34u^4 + 36u^3 + 19u^2 + 6u + 1, \\
  g_2(u) &= u^2 + u + 1, \quad g_6(u)=u^6 + u^5 + 2u^4 + 9u^3 + 12u^2 + 5u + 1,\\
 g_{12}(u)&= u^{12} + 4u^{11} + 3u^{10} - 4u^9 + 6u^7 - 17u^6 - 30u^5 + 6u^4 + 34u^3 +
        25u^2 + 8u + 1.
\end{align*}

Before proceeding to prove \Cref{thm:main_sec3}, we prove two propositions about the reduction types of $E$ at primes of multiplicative reduction.

\begin{proposition}\label{j-inv}
	Suppose $E$ has multiplicative reduction at a rational prime $p$. Then either the reduction is of type $I_{14k}$ for some $k$, or $p\equiv \pm 1 \pmod 7,$ in which case the reduction is $I_{2k}.$
\end{proposition}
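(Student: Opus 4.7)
The plan is to read off the Kodaira index directly from the identity $v_p(j_E)=-n$, which holds whenever $E$ has multiplicative reduction of type $I_n$ at $p$ (and is model-independent, since the $j$-invariant is a birational invariant). Since $E=E_u$ for some $u\in\Q$, we apply the given rational expression for $j(u)$ whose denominator factors as $u^{14}(u+1)^{14}\phi_3(u)^2$ with $\phi_3(u):=u^3+u^2-2u-1$, and whose numerator is $g_2(u)^3\,g_6(u)\,f_{12}(u)^3$ where $g_2(u)=u^2+u+1$ and $g_6,f_{12}$ are the polynomials defined in the paper. The condition $v_p(j(u))<0$ (multiplicative reduction) forces $p$ to divide one of $u$, $u+1$, $\phi_3(u)$, or the denominator of $u$, since these are precisely the places where $j$ has poles.

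I would then split into four sub-cases. (i) If $v_p(u)>0$ or $v_p(u)<0$, all of $g_2$, $g_6$, $f_{12}$, $\phi_3$, and $u+1$ are $p$-adic units at $u$ (using $g_i(0)=1$, $\phi_3(0)=-1$, and leading coefficients equal to $1$), so $v_p(j(u))=-14\,|v_p(u)|$, giving $I_{14k}$. (ii) If $v_p(u+1)>0$, direct substitution gives $g_2(-1)=g_6(-1)=f_{12}(-1)=\phi_3(-1)=1$, so again the numerator is a unit and $v_p(j(u))=-14\,v_p(u+1)$, giving $I_{14k}$. (iii) If $v_p(\phi_3(u))>0$, then $u$ reduces mod $p$ to a root of $\phi_3\bmod p$; since $\phi_3$ is the minimal polynomial of $\zeta_7+\zeta_7^{-1}$, such a root exists exactly when $p$ splits completely in $\Q(\zeta_7)^+$, i.e.\ $p\equiv\pm 1\pmod 7$, or in the ramified case $p=7$. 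For $p\neq 2,7$ with $p\equiv\pm 1\pmod 7$, a short resultant computation (one checks that $\operatorname{Res}(\phi_3,g_i)$ is a power of $7$ for each $g_i\in\{g_2,g_6,f_{12}\}$) shows that $\phi_3$ is coprime modulo $p$ to each numerator factor, so $v_p(g_i(u))=0$ and hence $v_p(j(u))=-2\,v_p(\phi_3(u))$, yielding $I_{2k}$.

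The main obstacle is the exceptional prime $p=7$: here $\phi_3\equiv(u-2)^3\pmod 7$, but simultaneously $g_2(2)=7$ and $g_6(2)=259=7\cdot 37$, so numerator and denominator exhibit cancellation in sub-case (iii). A direct computation shows $v_7(\phi_3(u))=v_7(g_2(u))=v_7(g_6(u))=1$ while $v_7(f_{12}(u))=0$ for every $u\equiv 2\pmod 7$, hence $v_7(j(u))\ge 3\cdot 1+1-2\cdot 1=2>0$ and $E$ does \emph{not} have multiplicative reduction at $7$ in this sub-case; so at $p=7$ multiplicative reduction arises only from sub-case (i) or (ii), which give $I_{14k}$. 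For $p=2$, sub-case (iii) is vacuous because $\phi_3$ is irreducible modulo $2$, so (i) or (ii) applies and again yields $I_{14k}$. Combining all cases gives the proposition.
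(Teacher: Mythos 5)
Your proposal is correct and follows essentially the same route as the paper: the same case division according to which factor of $u^{14}(u+1)^{14}(u^3+u^2-2u-1)^2$ the prime $p$ divides, the same resultant computations to exclude cancellation with the numerator, the same identification of $u^3+u^2-2u-1$ as the minimal polynomial of $\zeta_7+\zeta_7^{-1}$ to force $p\equiv\pm1\pmod 7$ or $p=7$, and the same explicit cancellation check at $p=7$. The only (harmless) difference is that you work directly with $v_p(j(u))$ rather than with $\Delta(u)$ and $c_4(u)$ separately, which spares you the paper's preliminary remark about the factor $h_{12}(u)$; just note that in your case (i) with $v_p(u)<0$ the numerator factors are not $p$-adic units --- one instead uses the leading coefficients to get $v_p(\mathrm{num})=48\,v_p(u)$ and $v_p(\mathrm{den})=34\,v_p(u)$, which still yields $v_p(j(u))=-14|v_p(u)|$.
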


\begin{proof}
A necessary condition for $E$ to have multiplicative reduction at $p$ is for $v_p(\Delta(u))>0$, so we consider only $u\in \Q$ which satisfy this. Let $\res(f,g)$ denote the resultant of the polynomials $f$ and $g$.  If $v_p(h_{12}(u))>0,$ then $E$ does not have multiplicative reduction at $p$, since $\res \left(h_{12}(u), g_{\Delta}(u)\right)=\res \left(h_{12}(u), g_{c_4}(u)\right)=1,$ where $g_{\Delta}$ is the numerator of $\Delta (u)$ and $g_{c_4}$ is the numerator of $c_4 (u),$ and therefore $v_p(j(u))=0.$ There are several possibilities for the condition $v_p(\Delta(u))>0$ to be satisifed:
\begin{itemize}
	\item If $v_p(u) \eqqcolon k>0$, then using the fact that $$\res\left(u, \frac{\Delta(u)}{u^{14}}\right)=\res\left(u,c_4(u) \right)=1,$$ we conclude that reduction mod $p$ will be of type $I_{14k}$.
	
	\item If $v_p(u) \eqqcolon -k<0$, with the change of variables $v\coloneqq \frac{1}{u}$ we get a similar situation as above, with $$\res\left(v, \frac{\Delta(v)}{v^{14}}\right)=\res\left(v,c_4(v) \right)=1,$$ so the reduction mod $p$ will be of type $I_{14k}$.

	\item If $v_p(u)=0$ and $v_p(u+1)\coloneqq k>0,$ then using the fact that $$\res\left(u+1, \frac{\Delta(u)}{(u+1)^{14}}\right)=\res\left(u+1,c_4(u) \right)=1,$$ we conclude that the reduction mod $p$ is of type $I_{14k}.$
	\item The only other possibility for multiplicative reduction is $v_p(u^3 + u^2 - 2u - 1)\eqqcolon k>0$. Note that a root $\alpha$ of $f(u) := u^3 + u^2 - 2u - 1$ generates the ring of integers $\Z[\alpha]$ of $\Q(\zeta_7)^+$. The fact that $p|f(u)$ implies that $f(u)$ has a root in $\F_p$ and hence $p$ is not inert $\Q(\zeta_7)^+$, implying $p\equiv \pm 1 \pmod 7$ or $p=7$. Since
	$$\res\left(u^3 + u^2 - 2u - 1, \frac{\Delta(u)}{(u^3 + u^2 - 2u - 1)^{2}}\right)=7^{30}$$ and $$\res\left(u^3 + u^2 - 2u - 1, c_4(u)\right)=7^{12},$$ it follows that there can be cancellation with the numerator only in the case $p=7$.
	\item Suppose $p=7$, $v_7(u)=0$ and $v_7(u^3 + u^2 - 2u - 1)=k>0$. An easy computation shows that $u \equiv 2 \pmod 7$ and $k=1$, and that the numerator will be divisible by a higher power of $7$ than $u^3 + u^2 - 2u - 1$, which show that the reduction will not be multiplicative.

\end{itemize}
\end{proof}

\begin{proposition}
The curve $E$ has multiplicative reduction of type $I_{14k}$ at $2$.
\end{proposition}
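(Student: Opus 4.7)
The plan is to reduce the statement to the classification already obtained in \Cref{j-inv} via a simple parity argument on the parametrizing rational number $u$. Writing $u = m/n$ in lowest terms with $\gcd(m,n) = 1$, the pair $(m,n)$ must fall into one of three parity cases, and in each I will verify that at least one of $v_2(u)$, $v_2(1/u)$, $v_2(u+1)$ is strictly positive, thereby landing in a case already analyzed in the proof of \Cref{j-inv}.

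In the first case ($m$ even, $n$ odd) one has $v_2(u) > 0$, which is exactly the first situation treated in the proof of \Cref{j-inv}, yielding reduction type $I_{14k}$ with $k = v_2(u)$. In the second case ($m$ odd, $n$ even) one has $v_2(u) < 0$, equivalently $v_2(1/u) > 0$, placing us in the second situation of \Cref{j-inv} and again giving $I_{14k}$. In the third case ($m$ and $n$ both odd), the integer $m+n$ is even so $v_2(u+1) > 0$, and we land in the third situation of \Cref{j-inv}, once more yielding $I_{14k}$.

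The only remaining multiplicative-reduction scenario in \Cref{j-inv} is the $I_{2k}$ type arising from $v_p(u^3 + u^2 - 2u - 1) > 0$ (with the other three valuations equal to zero); but that possibility requires $p \equiv \pm 1 \pmod 7$ and is therefore incompatible with $p = 2$. Consequently $E$ has multiplicative reduction at $2$ of type $I_{14k}$, as claimed. There is no genuine obstacle to the argument: all the work was done in \Cref{j-inv}, and the content of the present proof is only the elementary observation that coprimality of $m$ and $n$ forces at least one of $u$, $1/u$, $u+1$ to have positive $2$-adic valuation.
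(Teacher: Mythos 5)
Your proposal is correct and is essentially the paper's own argument: the paper likewise observes that either $v_2(u)\neq 0$ or else $v_2(u)=0$ and $v_2(u+1)>0$, and then invokes the case analysis in the proof of \Cref{j-inv} to conclude the reduction type is $I_{14k}$. Your version merely makes the underlying parity argument on $u=m/n$ explicit, which is exactly the content the paper leaves as an ``observation.''
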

\begin{proof}
	This follows from the observation that $v_2(u)\neq 0$ or both $v_2(u)=0$ and $v_2(u+1)>0$, from which it follows, by the calculations in the proof of \Cref{j-inv}, that in both cases the reduction type of $E_u$ at $2$ is $I_{14k}$.
\end{proof}


We now prove four useful lemmas.

\begin{lemma}\label{lem:fd}
	Let $x\in Y(K)$ and let $\PP$ be a prime of $K$ over $2$. Then $x$ modulo $\PP$ is defined over $\F_2$.
\end{lemma}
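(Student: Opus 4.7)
The plan is the following. Since the preceding proposition shows that $E$ has multiplicative reduction of type $I_{14k}$ at $2$, and $2$ is unramified in the cyclic cubic field $K$ (the conductor of any cyclic cubic extension is supported only on $9$ and primes $\equiv 1 \pmod 3$), $E$ retains multiplicative reduction of type $I_{14k}$ at $\PP$. Hence $\bar x = (\bar u_0, \bar v_0)$ lies at a cusp of the reduction $\bar X$.

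Next I would enumerate the cuspidal points of $\bar X$ explicitly from the bihomogeneous form of $f(u,v) = 0$ in $\P^1 \times \P^1$. Reducing modulo $2$ and analyzing the cases $u, v \in \{0, 1, \infty\}$ versus $u^3 + u^2 + 1 \equiv 0$ (and symmetrically in $v$), I expect to find exactly two families of cuspidal points: the nine $\F_2$-rational points $(\bar u_0, \bar v_0) \in \{0, 1, \infty\}^2$, which are the reductions of the nine rational cusps of $X$, and the nine points $(\bar\alpha_i, \bar\alpha_j)$ where $\bar\alpha_i$ ranges over the three roots of $u^3 + u^2 + 1 \in \F_2[u]$ in $\F_8$, which are the reductions of the nine cusps of $X$ defined over $\Q(\zeta_7)^+$. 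The lemma thus reduces to ruling out the second family for our non-cuspidal $x$.

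Suppose for contradiction that $\bar x = (\bar\alpha_i, \bar\alpha_j)$, and set $c := v_\PP(u_0^3 + u_0^2 - 2u_0 - 1) > 0$. A direct computation in $\F_8$ verifies that each of $g_2$, $g_6$, $f_{12}$, and $h_{12}$ takes a nonzero value at the roots of $u^3 + u^2 + 1$, so substituting into the formula for $j(u)$ gives $v_\PP(j(u_0)) = -2c$. Matching with $v_\PP(j) = -14k$ (forced by the Kodaira type $I_{14k}$) then yields $c = 7k$, a strong $\PP$-adic approximation condition. This already rules out the easy local possibilities, but it does not yet produce a contradiction on its own.

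To close the argument I would invoke the degree-$3$ maps $\pi_\alpha, \pi_\beta : X \to \P^1$. Since every non-cuspidal cubic point of $X$ lies above a $\Q$-rational point under $\pi_\alpha$ or $\pi_\beta$, we may assume $\pi_\alpha(x) \in \P^1(\Q)$; then the three Galois conjugates $x, \sigma x, \sigma^2 x$ exhaust the fiber of $\pi_\alpha$, which is an $\alpha$-orbit, so $\sigma x = \alpha^i x$ for some $i \in \{1, 2\}$. Reducing modulo $\PP$ yields $\Frob_\PP(\bar x) = \alpha^i(\bar x)$. On the nine non-rational cuspidal reductions, $\Frob_\PP$ acts coordinate-wise by squaring while $\alpha$ acts freely (cycles of length $3$, as $\alpha$ has no fixed points on the cusps). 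The main obstacle---and the principal technical step---will be to compute the $\alpha$-action on these nine cuspidal reductions via the moduli description $\alpha = \omega\tau$ and to verify that no Frobenius orbit coincides with an $\alpha$-orbit there, so that the equation $\Frob_\PP(\bar x) = \alpha^i(\bar x)$ has no solution among non-rational cuspidal reductions; the symmetric argument with $\pi_\beta$ handles the remaining cases.
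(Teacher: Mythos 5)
Your strategy can probably be pushed through, but as written it has a real gap and it massively overshoots what is needed. The gap: everything hinges on the final claim that $\Frob_{\PP}(\overline{x})=\alpha^{\pm1}(\overline{x})$ has no solution among the nine reductions $(\overline{\alpha}_i,\overline{\alpha}_j)$, and you explicitly defer this verification rather than perform it; a priori it could fail, since both $\Frob_{\PP}$ and $\alpha$ act on those nine points as fixed-point-free permutations of order $3$, so agreement at some point is not excluded by any formal reason you give. (It does succeed, but only because $\alpha$ is a deck transformation of the $u$-projection and hence fixes the first coordinate, while Frobenius moves $\overline{\alpha}_i\mapsto\overline{\alpha}_i^{2}\neq\overline{\alpha}_i$.) Moreover, once one knows that the two degree-$3$ maps $\pi_\alpha,\pi_\beta$ are precisely the coordinate projections of the bidegree-$(3,3)$ model in $\P^1\times\P^1$, your own hypothesis $\pi_\alpha(x)\in\P^1(\Q)$ already kills the second family in one line: a rational coordinate reduces into $\P^1(\F_2)$, so $\overline{u}_0\neq\overline{\alpha}_i$, and no Kodaira types, no unramifiedness of $2$ in cyclic cubic fields, and no Frobenius-versus-$\alpha$ computation are needed. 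You would also have to justify several reduction compatibilities at $2$ (that $\overline{x}$ is the reduction of a cusp, that $\alpha$ commutes with reduction, that the $18$ cuspidal reductions exhaust the candidates) on a model whose special fibre at $2$ is a union of three rational curves; none of this is addressed.

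The paper's proof is two lines and uses none of this machinery. By \cite{BN2}, either $u_0\in\P^1(\Q)$ or $v_0\in\P^1(\Q)$. Over $\F_2$ the defining polynomial factors into three bidegree-$(1,1)$ factors, $f\equiv(u+v)(uv+u+1)(uv+v+1)$ (the paper prints one factor with a typo), and each factor, being linear in each variable separately with coefficients in $\F_2$, shows that if one of $\overline{u}_0,\overline{v}_0$ lies in $\P^1(\F_2)$ then so does the other. Hence $\overline{x}$ is defined over $\F_2$.
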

\begin{proof}
As mentioned above, the results of \cite{BN2} imply that a non-cuspidal cubic point on $x\in X$ given by the equation $f(u,v)=0$ in \eqref{X214}
satisfies either $u\in \P^1(\Q)$ or $v\in \P^1(\Q)$. Over $\F_2$, the polynomial $f$ factors as
$$f(u,v)=(u+v)(uv+v+1)(uv+v+1),$$
which implies that if one of $u$ or $v$ is $\in \P^1(\F_2)$, then so is the other. This implies that the reduction of $x$ modulo $\PP$ is defined over $\F_2$.
\end{proof}

\begin{lemma}
	\label{lem:1}
	Let $F=\Q(\zeta_7)^+$, let $C$ be a cusp of $X$ whose field of definition is $F$ and let $q$ be a rational prime. Then the field of definition of the reduction of $C$ in $\overline \F_q$ is $\F_{q^3}$ if $q\not\equiv \pm 1 \pmod 7$ and $\F_q$ if $q\equiv \pm 1 \pmod 7$.
\end{lemma}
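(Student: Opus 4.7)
The plan is to reduce the claim to the Galois structure of the cuspidal orbit of $C$ and to invoke the standard compatibility between reduction and the decomposition group. First I would recall that $F=\Q(\zeta_7)^+$ is the unique cubic subfield of $\Q(\zeta_7)$ and that $\Gal(F/\Q)\cong (\Z/7\Z)^*/\{\pm 1\}\cong \Z/3\Z$; by cyclotomic reciprocity, a rational prime $q\neq 7$ splits completely in $F$ if and only if $q\equiv \pm 1\pmod 7$, and otherwise, since $[F:\Q]=3$ is prime, $q$ must be inert. Correspondingly the residue field at a prime $\PP$ of $F$ above $q$ is either $\F_q$ or $\F_{q^3}$, matching the two cases in the statement.

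Next, the cusp $C$ lies in a $\Gal(F/\Q)$-orbit of three distinct $F$-rational cusps $C_1=C,C_2,C_3$ on which $\Gal(F/\Q)$ acts transitively. Fixing a prime $\PP$ of $F$ above $q$ and an embedding of $\OO_F/\PP$ into $\overline{\F_q}$, the standard identity
\[
\Frob_q(\bar C_i)=\overline{\sigma(C_i)},
\]
with $\sigma$ generating the decomposition group $D_\PP\subseteq \Gal(F/\Q)$, yields both cases of the lemma at once. When $q\equiv \pm 1\pmod 7$, $D_\PP$ is trivial, so $\Frob_q$ fixes $\bar C$ and $\bar C\in X(\F_q)$. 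When $q\not\equiv \pm 1\pmod 7$, $D_\PP=\Gal(F/\Q)$ is generated by $\sigma$, which acts as a $3$-cycle on $\{C_1,C_2,C_3\}$; hence $\Frob_q$ acts as a $3$-cycle on $\{\bar C_1,\bar C_2,\bar C_3\}$ provided these reductions are pairwise distinct, and then $\bar C$ has field of definition exactly $\F_{q^3}$.

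The hard part is this last proviso: the three conjugate reductions must remain pairwise distinct in $\overline{\F_q}$. For primes $q\notin\{2,7\}$, $X$ has good reduction and distinct cusps automatically remain distinct on the smooth special fibre of the modular curve, so the conclusion is immediate. The case of genuine interest for \Cref{thm:main_sec3} is $q=2$, where $X$ has bad reduction; here I would dispatch the claim directly from the affine model \eqref{X214}, by writing down the coordinates in $F$ of the nine non-rational cusps, reducing them modulo the unique prime of $F$ above $2$, and verifying explicitly that each of the three Galois orbits still consists of three distinct points in $\overline{\F_2}$.
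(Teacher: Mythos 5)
Your strategy is workable and the split case is fine, but there is a genuine gap in the inert case: the assertion that for $q\notin\{2,7\}$ ``distinct cusps automatically remain distinct on the smooth special fibre'' because $X$ has good reduction is not a valid principle. Good reduction of a curve does not prevent two distinct points from reducing to the same point modulo a prime; injectivity of reduction on the cusps is a genuine fact about the cuspidal subscheme of a modular curve being finite \'etale away from the level, not a formal consequence of smoothness of the special fibre, and you neither cite nor prove it. The gap is easy to close, and closing it also makes your separate $q=2$ computation superfluous: from the model \eqref{X214}, the cusps of $X$ defined over $F$ have coordinates which are roots of $u^3+u^2-2u-1$, the minimal polynomial of $\zeta_7+\zeta_7^{-1}$, whose root generates $\OO_F$. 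When $q$ is inert in $F$ this cubic remains irreducible modulo $q$, so the reductions of its three roots are pairwise distinct and each already generates $\F_{q^3}$ over $\F_q$; this gives the distinctness you need for every inert $q$, including $q=2$, in one stroke.

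The paper's own proof is exactly the one-line version of this observation: the residue field of $\overline C$ equals the residue field of $F$ at $q$, so $[k(\overline C):\F_q]=[\Q_q(\zeta_7+\zeta_7^{-1}):\Q_q]$, which is $1$ or $3$ according to whether $q\equiv\pm1\pmod 7$ or not. This bypasses the Frobenius-orbit bookkeeping and any appeal to good reduction altogether. (Both your argument and the paper's silently misbehave at $q=7$, which is totally ramified in $F$, so that $\overline C$ is defined over $\F_7$ although $7\not\equiv\pm1\pmod 7$; since only $q\equiv\pm1\pmod 7$ is used in the application to \Cref{pm1}, this is harmless.)
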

\begin{proof}
	We have $[k(C):\F_q]=[\Q_q(\zeta_7+\zeta_7^{-1}):\Q_q]$ from which the claim follows.
\end{proof}

\begin{lemma}\label{pm1}
  Let $q\equiv \pm 1 \pmod 7$ be a rational prime such that $E$ has multiplicative reduction over $q$ and let $\PP$ be a prime of $K$ over $q$. Then the field of definition of the reduction of $x\in X$ modulo $\PP$ corresponding to the curve $E$ is $\F_q$.

\end{lemma}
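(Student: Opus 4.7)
My plan is to show that the reduction $\bar x$ is in fact a cusp of $X$ modulo $\PP$, and then to apply \Cref{lem:1} together with the description of the cusps of $X$ recalled just above the statement.

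Since $q \equiv \pm 1 \pmod 7$, in particular $q \notin \{2,7\}$, so $q$ is coprime to the level $14$. Hence $X=X_1(2,14)$ has good reduction at $q$, and the cusps of $X_{/\Q}$ reduce injectively to cusps of $X_{/\F_q}$. The first key step is to argue that $\bar x$ lands on this cuspidal locus. Since $E$ has multiplicative reduction at $\PP$, its N\'eron model over $\OO_{K,\PP}$ has closed fiber a N\'eron $m$-gon, that is, a generalized elliptic curve of Kodaira type $I_m$. Via the Deligne--Rapoport compactification of $X_1(2,14)$ over $\Z[1/14]$, the pair consisting of this generalized elliptic curve together with the reduction of the level structure $(P,Q,R)$ gives a point on the cuspidal divisor of $X$ modulo $\PP$, so $\bar x$ is indeed a cusp.

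The second step is to read off the field of definition of that cusp. The excerpt records that the $18$ cusps of $X_{/\Q}$ split into $9$ defined over $\Q$ and $9$ defined over $F=\Q(\zeta_7)^+$. The cusps in the first group reduce trivially to $\F_q$-rational points. For the second group, the hypothesis $q\equiv \pm 1\pmod 7$ is exactly the condition that $q$ splits completely in $F$, so by \Cref{lem:1} their reductions in $\overline{\F_q}$ are defined over $\F_q$. Either way, $\bar x$ is $\F_q$-rational, and since $\F_q$ is the prime field this forces its field of definition to be exactly $\F_q$.

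The only point requiring genuine care is the first step, namely the moduli-theoretic identification of $\bar x$ as a cusp of the special fiber; this rests on the Deligne--Rapoport integral model of $X_1(2,14)$ over $\Z[1/14]$, which is available precisely because $q$ is coprime to the level. Once this is granted, the remainder of the argument is purely combinatorial, using only that our congruence condition on $q$ forces complete splitting in the field of definition of every cusp of $X$.
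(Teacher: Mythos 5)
Your proposal is correct and follows essentially the same route as the paper: the paper's proof is the single sentence ``Since $x$ modulo $\PP$ is a cusp, the statement follows from \Cref{lem:1},'' and you have simply supplied the (standard, Deligne--Rapoport) justification for why multiplicative reduction forces $\overline{x}$ onto the cuspidal locus before invoking \Cref{lem:1} exactly as the authors do.
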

\begin{proof}
  Since $x$ modulo $\PP$ is a cusp, the statement follows from \Cref{lem:1}.
\end{proof}

\begin{lemma}\label{lem:reduction}
Let $q$ be a rational prime and let $\fq$ be a prime of
$K':=K\Q(\zeta_7)^+$ above $q$. Then the reductions of the cusps of $X_1(14)(K')$ modulo $\fq$ are all distinct.
\end{lemma}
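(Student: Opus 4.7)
The plan is to exploit the fact that $X_1(14)$ is an elliptic curve of conductor $14$, hence has good reduction outside $\{2, 7\}$. After fixing some cusp as the origin of the group law on $X_1(14)$, the Manin--Drinfeld theorem ensures that every cusp is a torsion point; thus all $12$ cusps lie in a finite subgroup $C \subset X_1(14)(\overline{\Q})$ whose order $n$ can be read off from a standard model and is small (one expects $n \mid 24$). The field of definition of each cusp is a subfield of $\Q(\zeta_{14}) = \Q(\zeta_7)$, and since $K$ is totally real (being cyclic cubic), the $K'$-rational cusps are precisely those defined over $\Q(\zeta_7)^+ \subset K'$.

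The first step is to dispose of the ``easy'' primes abstractly. For any prime $\fq$ of $K'$ whose residue characteristic $q$ does not divide $14 n$, $X_1(14)$ has good reduction at $q$ and the reduction map on $X_1(14)$ is injective on the prime-to-$q$ torsion, hence injective on $C$. So every pair of distinct $K'$-rational cusps stays distinct after reduction modulo $\fq$; this handles all but a finite list of rational primes.

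The remaining primes --- those above $2$, $7$, and the small prime divisors of $n$ --- must be dealt with by an explicit finite verification. I would fix a model of $X_1(14)$ (for instance the Cremona model), write down the coordinates of each $K'$-rational cusp in it, reduce them modulo each relevant prime $\fq$, and check pairwise that no two collapse to the same point in $\overline{\F_q}$. The main obstacle will be $q = 2$ and $q = 7$, where $X_1(14)$ has multiplicative reduction and the abstract injectivity of reduction on torsion fails; there the argument is genuinely computational rather than structural. However, because the conductor is tiny and there are only $12$ cusps, this verification is a short finite check, naturally carried out in Magma alongside the rest of the computations in the paper.
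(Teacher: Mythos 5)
Your proposal is correct and follows essentially the same route as the paper: Manin--Drinfeld makes the cusps torsion points, injectivity of reduction on torsion handles almost all primes, and the remaining primes are settled by a finite explicit check. The only difference is that the paper invokes Katz's appendix to get injectivity on the full torsion for every odd $q$ (so that only $q=2$ needs the explicit verification), whereas your more elementary prime-to-$q$ injectivity at primes of good reduction leaves $2$, $7$ and the divisors of the order of the cuspidal subgroup to the computation.
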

\begin{proof}
Let $z$ be a cusp of $X_1(14)$. Since the point at infinity $\mathcal O$ of the elliptic curve $X_1(14)$ is a rational point and all the rational points on $X_1(14)$ are cusps, it follows that $z=z-\mathcal O$ (viewed as an element of the Jacobian of $X_1(14)$) is a torsion point by the Manin-Drinfeld theorem \cite{manin,drinfeld}. Since all the cusps of $X_1(14)$ are contained in $X_1(14)(\Q(\zeta_7)^+)$, it is enough to prove that the reductions modulo $\fq'$, where $\fq'$ is a prime of $\Q(\zeta_7)^+$ below $\fq$, of the cusps, viewed as elements of $X_1(14)(\Q(\zeta_7)^+)$, are all distinct.

For $q>2$ reduction modulo $\fq'$ is injective on the torsion of $X_1(14)(\Q(\zeta_7)^+)$ by \cite[Appendix]{katz} and for $q=2$ we explicitly check that reduction modulo $\fq'$ is injective.
\end{proof}

\begin{proof}[Proof of \Cref{thm:main_sec3}]
	Let $\sigma$ be a generator of $\Gal(K/\Q)$ and suppose $q$ is inert or ramified in $K$ and $\PP$ is the unique prime over $q$. Let $x\in Y(K)$ be a point representing the quadruple $(E,P,Q,R)$ as defined at the beginning of this section. As the degree 3 map $X\rightarrow\P^1$ is quotienting by $\alpha$, it follows that
$$\left\{ x, x^\sigma, x^{\sigma^2}\right\}= \left\{ x, \alpha(x), \alpha^2(x)\right\},$$
so we can suppose without loss of generality that $x^\sigma=\alpha(x)$ and $x^{\sigma ^2}=\alpha^2(x)$. Let $\overline x=\overline {C_0}$, for some cusp $C_0\in X$. It follows that $\overline{\alpha(x)}=\overline{\alpha(C_0)}$ and $\overline{\alpha^2(x)}=\overline{\alpha^2(C_0)}$. Denote by $C_1:=\alpha(C_0)$ and by $C_2:=\alpha^2(C_0)$; all $C_i$ are distinct as $\alpha$ acts without fixed points on the cusps. By \Cref{lem:fd} and \Cref{pm1}, all $\overline C_i$ are defined over $\F_q$.

Denote by $K_i:=\phi(C_i)$ and by $y=\phi(x)\in Y_1(14)$. Descending everything to the elliptic curve $X_1(14)$, we have $\overline y=\overline {K_0}$, $\overline{y^{\sigma}}=\overline{{K_1}}$, $\overline{ y^{\sigma^2}}=\overline {K_2}$.  
	By \Cref{lem:fd} and \Cref{pm1}, all $\overline {C_i}$ and hence all $\overline K_i$ are defined over $\F_q$.

Denote by $\Frob \PP$ the Frobenius substitution at $q$ in $\Gal(K/\Q)$ if $q$ is inert in $K$, and put $\Frob \PP:=id $ if $q$ ramifies in $K$. Suppose without loss of generality that $\Frob \PP=\sigma$ if $q$ is inert. Following the same reasoning as in \cite[Proposition 3.1]{najman} and using the fact that $\overline y\in X(\F_q)$, we get that $$\overline{K_0}=\overline{y}=\overline{y^{\Frob \PP}}= \overline{y^\sigma}= \overline{K_1},$$
and similarly $\overline{K_0}=\overline{K_2}$.

By \Cref{lem:reduction} it follows that $K_0=K_1=K_2$. This is impossible since $C_0,C_1,C_2$ are distinct and $\phi$ is a degree $2$ map.
\end{proof}

\subsection*{Acknowledgments}
We thank the referees and Andrej Dujella for many helpful comments that greatly improved the exposition of the paper.

\bibliographystyle{unsrt}

\end{document}